\crefname{hypothesis}{Hypothesis}{Hypotheses}
\Crefname{ALC@unique}{Line}{Lines}
\colorlet{texcscolor}{blue!50!black}
\colorlet{texemcolor}{red!70!black}
\colorlet{texpreamble}{red!70!black}
\colorlet{codebackground}{black!25!white!25}
\lstdefinestyle{siamlatex}{%
  style=tcblatex,
  texcsstyle=*\color{texcscolor},
  texcsstyle=[2]\color{texemcolor},
  keywordstyle=[2]\color{texemcolor},
  moretexcs={cref,Cref,maketitle,mathcal,text,headers,email,url},
}
\DeclareTotalTCBox{\code}{ v O{} }
{ 
  fontupper=\ttfamily\color{black},
  nobeforeafter,
  tcbox raise base,
  colback=codebackground,colframe=white,
  top=0pt,bottom=0pt,left=0mm,right=0mm,
  leftrule=0pt,rightrule=0pt,toprule=0mm,bottomrule=0mm,
  boxsep=0.5mm,
  #2}{#1}
\patchcmd\newpage{\vfil}{}{}{}
\newcommand{\norm}[1]{\lVert#1\rVert}
\newcommand{\R}{\mathbb R}
\newcommand{\N}{\mathbb N}
\newcommand{\be}{\mathbf e}
\newcommand{\bq}{\mathbf q}
\newcommand{\bu}{\mathbf u}
\newcommand{\bc}{\mathbf c}
\newcommand{\br}{\mathbf r}
\newcommand{\bx}{\mathbf x}
\newcommand{\by}{\mathbf y}
\newcommand{\bz}{\mathbf z}
\newcommand{\bss}{\mathbf s}
\newcommand{\bp}{\mathbf p}
\newcommand{\calo}{\mathcal O}
\newcommand{\calq}{\mathcal Q}
\newcommand{\calz}{\mathcal Z}
\newcommand{\wh}{\widehat}
\newcommand{\wt}{\widetilde}
\DeclareMathOperator*{\argmax}{\arg\!\max}
\definecolor{green}{rgb}{0.0, 0.5, 0.0}
\definecolor{orange}{rgb}{1.0,0.4,0}
\definecolor{aqua}{rgb}{0.0, 0.0, 1.0}
\definecolor{darkmagenta}{rgb}{0.55, 0.0, 0.55}
\colorlet{yellow}{green!30!orange!70!}
\title{A Generalized CUR decomposition for matrix pairs\thanks{Version \today.\funding{This work has received funding from the European Union's Horizon 2020 research and innovation programme under the Marie Sklodowska-Curie grant agreement No 812912.}}}
\author{Perfect~Y.~Gidisu\thanks{Department of Mathematics and Computer Science, TU Eindhoven, The Netherlands, (\email{p.gidisu@tue.nl}, \email{m.e.hochstenbach@tue.nl}).} \and Michiel~E.~Hochstenbach\footnotemark[2]}
\begin{document}
\maketitle
\begin{tcbverbatimwrite}{tmp_\jobname_abstract.tex}
\begin{abstract}
We propose a generalized CUR (GCUR) decomposition for matrix pairs $(A,B)$. Given matrices $A$ and $B$ with the same number of columns, such a decomposition provides low-rank approximations of both matrices simultaneously, in terms of some of their rows and columns. We obtain the indices for selecting the subset of rows and columns of the original matrices using the discrete empirical interpolation method (DEIM) on the generalized singular vectors.
When $B$ is square and nonsingular, there are close connections between the GCUR of $(A,B)$ and the DEIM-induced CUR of $AB^{-1}$.
When $B$ is the identity, the GCUR decomposition of $A$ coincides with the DEIM-induced CUR decomposition of $A$.
We also show similar connection between the GCUR of $(A,B)$ and the CUR of $AB^+$ for a nonsquare but full-rank matrix $B$, where $B^+$ denotes the Moore--Penrose pseudoinverse of $B$. While a CUR decomposition acts on one data set, a GCUR factorization jointly decomposes two data sets. The algorithm may be suitable for applications where one is interested in extracting the most discriminative features from one data set relative to another data set. In numerical experiments, we demonstrate the advantages of the new method over the standard CUR approximation; for recovering data perturbed with colored noise and subgroup discovery. 
\end{abstract}

\begin{keywords}
generalized CUR decomposition, matrix pair, GSVD, low-rank approximation, interpolative decomposition, DEIM, subset selection, colored noise, subgroup discovery
\end{keywords}

\begin{AMS}
65F55, 15A23, 65F15, 47A58
\end{AMS}
\end{tcbverbatimwrite}
\input{tmp_\jobname_abstract.tex}

\section{Introduction}\label{sec:intro}
With the proliferation of big data matrices, dimension reduction has become an important tool in many data analysis applications. There are several methods of dimension reduction for a given problem; however, the approximated data often consists of derived features that are either no longer interpretable or difficult to interpret in the original context.  For example, while the singular value decomposition (SVD) provides an optimal approximation and compression of data, it may be difficult for domain experts to directly draw conclusions or interpret the singular vectors. In some applications, it is necessary to find a dimension reduction method that preserves the original properties (such as sparsity, nonnegativity, being integer-valued) of the data and ensures interpretability. In an attempt to solve this difficulty, one possibility for a low-rank representation of a given data matrix is to use a subset of the original columns and rows of the matrix itself: a CUR decomposition; see, e.g., Mahoney and Drineas \cite{Drineas}. The selected subsets of rows and columns capture the most relevant information of the original matrix.

A CUR decomposition of rank $k$ of a (square or rectangular) $m \times n$ matrix $A$ is of the form
\begin{equation}
\label{cur}
A \ \approx \ CMR \ := \ A P \, \cdot \, M \, \cdot \, S^TA.
\end{equation}
 Here, $P$ is an $n \times k$ (where $k < \min(m,n)$) {\em index selection matrix} with some columns of the identity matrix $I_n$ that selects certain columns of $A$.
Similarly, $S$ is an $m \times k$ matrix with columns of $I_m$
that selects certain rows of $A$; so $C$ is $m \times k$ and $R$ is $k \times n$. We construct the $k \times k$ matrix $M$ in such a way that the decomposition has some desirable approximation properties that we will discuss in \cref{sec:GCUR}. (In line with \cite{Str19}, we will use the letter $M$ rather than $U$.) It is also possible to select a different number of columns and rows; here, $M$ will not be of dimension $k\times k$. It is worth noting that given $k$, this decomposition is not unique; there are several ways to obtain this form of approximation to $A$ with different techniques of choosing the representative columns and rows. Many algorithms for this decomposition using the truncated SVD (TSVD) as a basis have been proposed \cite{Boutsidis, Zhang, Mahoney, Drineas}. In \cite{Sorensen}, Sorensen and Embree present a CUR decomposition inspired by a discrete empirical interpolation method (DEIM) on a TSVD. Let the rank-$k$ TSVD be
\begin{equation}
\label{svd}
A \approx W_k \Psi_k Z_k^T,
\end{equation}
where the columns of $W_k$ and $Z_k$ are orthonormal,
while $\Psi_k$ is diagonal with nonnegative elements. Throughout the paper we assume a unique truncated singular value decomposition, i.e., the $k$th singular value is not equal to the $(k+1)$st singular value.

There is extensive work on CUR-type decompositions in both numerical linear algebra and theoretical computer science. In this paper, we develop a generalized CUR decomposition (GCUR) for matrix pair, for any two matrices $A$ and $B$ with the same number of columns: $A$ is $m\times n$, $B$ is $d\times n$ and of full rank. The intuition behind this generalized CUR decomposition is that we can view it as a {\em CUR decomposition of $A$ relative to $B$}. As we will see in \cref{pp3}, when $B$ is square and nonsingular, the GCUR decomposition has a close connection with the CUR of $AB^{-1}$. The GCUR is also applicable to nonsquare matrices $B$; see the examples in \cref{sec:EXP}. We show in \cref{pp3} that if $B$ is nonsquare but of a full rank, we still have a close connection between the CUR decomposition of $AB^+$ (where $B^+$ denotes the pseudoinverse of $B$) and the GCUR decomposition. Another intuition for this GCUR decomposition comes from a footnote remark by Mahoney and Drineas \cite[p.~700]{Mahoney}: ``for data sets in which a low-dimensional subspace obtained by the SVD failed to capture category separation, CUR decomposition performed correspondingly poorly''. This is evident in \cref{exp:3}.

Inspired by the work of Sorensen and Embree \cite{Sorensen}, we present a generalized CUR decomposition using the discrete empirical interpolation method.
The DEIM algorithm for interpolation indices, presented in \cite{Chaturantabut}, is a discrete variant of empirical interpolation proposed in \cite{Barrault} as a method for model order reduction for nonlinear dynamical systems. In \cite{Sorensen}, the authors used DEIM as an index selection technique for constructing the $C$ and $R$ factors of a CUR decomposition. The DEIM algorithm independently selects the column and row indices based on the right and left singular vectors of a data matrix $A$, respectively. Our new GCUR method uses the matrices obtained from the GSVD instead. Besides using DEIM on the GSVD for index selection, we can also use other CUR-type index selection strategies for the GCUR (see also \cref{sec:Con}). The proposed method can be used in situations where a low-rank matrix is perturbed with noise, where the covariance of the noise is not a multiple of the identity matrix. It may also be appropriate for applications where one is interested in extracting the most discriminative information from a data set of interest relative to another data set. We will see examples of these in \cref{sec:EXP}. 

\begin{example}
The following simple example shows that using the matrices obtained from the GSVD instead of the SVD can lead to more accurate results when approximating data with colored noise. Unlike white noise, colored noise is correlated. In discrete time, the noise samples of colored noise need not be independent. In terms of the Fourier transform, some frequencies are more present than others.
As in \cite[p.~55]{hansen} and \cite{park1994}, we use the term ``colored noise'' for noise of which the covariance matrix is not a multiple of the identity.

We consider a full-rank matrix $A_E$ representing low-rank data, and want to try to recover an original low-rank matrix perturbed by colored noise. Our test matrix $A_E$ is a rank-2 matrix $A$ of size $3\times3$ perturbed by additive colored noise $E$ with a given desired covariance structure. We take  

\[A= {\footnotesize \begin{bmatrix*}[r]
      1  &    0  &  1\\
     0   &  2    & 2\\
     1   & 1 &  2
 \end{bmatrix*}}, 
 \quad  E^T\!E= {\footnotesize \begin{bmatrix*}[l]
      1.0 & 0.8 & 0.3\\
    0.8 &  1.0 & 0.8\\
    0.3 & 0.8  & 1.0
 \end{bmatrix*}}.\]
We generate the colored noise as an additive white Gaussian noise multiplied by the Cholesky factor $(R)$ of the desired covariance matrix. The matrix $A_E$ is, as a result, a sum of a rank-2 matrix and a correlated Gaussian noise matrix. We compute the SVD of both $A$ and $A_E$. The $k$ dominant left singular vectors of $A$ are denoted by $W_k$ while those of $A_E$ are $\widetilde{W}_k$. We also compute the GSVD of $(A_E, R)$ and denote the $k$ dominant left generalized singular vectors by $U_k$. Since we are interested in recovering $A$, we examine the angle between the leading $k$-dimensional exact left singular subspace \text{Range}$(W_k)$ and its approximations \text{Range}$(\widetilde{W}_k)$ and \text{Range}$(U_k)$. We generate 1000 different test cases and take the average of the subspace angles.

\Cref{tab:0} shows the results for $k=2$ and three different noise levels. We observe that the approximations obtained using the GSVD in terms of subspace angles are more accurate than those from the SVD; about 40\% gain in accuracy. This illustrates the potential advantage of using generalized singular vectors in the presence of colored noise.
\begin{table}[htb!]
\centering
{\caption{The average angle between the leading two-dimensional exact singular subspace \text{Range}$(W_2)$ (which is the range of $A$) and its approximations \text{Range}$(\widetilde{W}_2)$ and \text{Range}$(U_2)$, for different values of the noise level $\varepsilon$. The subspaces \text{Range}$(U_2)$ and \text{Range}$(\widetilde{W}_2)$ are from the SVD of $A_E$ and the GSVD of $(A_E, R)$, respectively.}\label{tab:0}
{\footnotesize
\begin{tabular}{clc} \hline \rule{0pt}{3mm}%
$\varepsilon$ & Method & Subspace angle    \\ \hline \rule{0pt}{3.5mm}%
$5\cdot 10^{-2}$ & SVD  &$1.7\cdot 10^{-2}$   \\
& GSVD & $1.2\cdot 10^{-2}$ \\[0.5mm] \hline \rule{0pt}{3.5mm}%
$5\cdot 10^{-3}$ & SVD &$1.7\cdot 10^{-3}$   \\
& GSVD & $1.1\cdot 10^{-3}$ \\[0.5mm] \hline \rule{0pt}{3.5mm}%
$5\cdot 10^{-4}$ & SVD & $1.7\cdot 10^{-4}$  \\
& GSVD & $1.1\cdot 10^{-4}$ \\[0.5mm]\hline %
\end{tabular}}}
\end{table}
\end{example}

Inspired by this example, we expect that the GCUR compared to the CUR may produce better approximation results in the presence of non-white noise, as it is based on the GSVD instead of the SVD. We show in \cref{sec:EXP} that the GSVD and the GCUR may provide equally good approximation results even when we use an inexact Cholesky factor. 

Throughout the paper, we denote the 2-norm by $\norm{\cdot}$ and the infinity-norm by $\norm{\cdot}_\infty$. We use MATLAB notation to index vectors and matrices; thus, $A(:,\bp)$ denotes the $k$ columns of $A$ whose corresponding indices are in vector $\bp \in \N^k$.

\textbf{Outline.}
We give a brief introduction to the generalized singular value decomposition in \cref{sec:GSVD}. We also discuss the truncated GSVD and its approximation error bounds. We summarize the DEIM technique we use for index selection in \cref{sec:DEIM}. \Cref{sec:GCUR} introduces the new generalized CUR decomposition with an analysis of its error bounds. In \cref{alg:GCUR-DEIM}, we present a DEIM-type GCUR decomposition algorithm. Results of numerical experiments are presented in \cref{sec:EXP}, followed by conclusions in \cref{sec:Con}. 

\section{ Generalized singular value decomposition}\label{sec:GSVD}
The GSVD appears throughout this paper since it is a key building block of the proposed algorithm. This section gives a brief overview of this decomposition. The original proof of the existence of the GSVD has first been introduced by Van Loan in \cite{Van}. Paige and Saunders \cite{Paige} later presented a more general formulation without any restrictions on the dimensions except for both matrices to have the same number of columns. Other formulations and contributions to the GSVD have been proposed in \cite{stewart1982,sun,van1985}. For our applications in this paper, let $A\in \R^{m\times n}$ and $B\in \R^{d\times n}$ with both $m\ge n$ and $d\ge n$. Following the formulation of the GSVD proposed by Van Loan \cite{Van}: there exist matrices $U \in{ \mathbb R ^{m \times m}}$, $V \in{ \mathbb R^{d \times d}}$ with orthonormal columns and a nonsingular $X \in {\mathbb R ^{n \times n}}$ such that
\begin{equation}
\begin{aligned}
\label{gsvd}
U^T\!AX &= \Gamma = \text{diag}(\gamma_1,\dots,\gamma_n),  \qquad &\gamma_i\in [0,1],\\
V^T\!BX &=\Sigma = \text{diag}(\sigma_1,\dots,\sigma_n), \qquad &\sigma_i\in [0,1],
\end{aligned}
\end{equation}
where $\gamma_i^{2}+\sigma_i^{2} = 1 $. Although traditionally the ratios $\gamma_i/\sigma_i$ are in a nondecreasing order, for our purpose we will instead maintain a nonincreasing order. The matrices $U$ and $V$ contain the left generalized singular vectors of $A$ and $B$, respectively; and similarly, $X$ contains the right generalized singular vectors and is identical for both decompositions. While the SVD provides two sets of linearly independent basis vectors, the GSVD of $(A, B)$ gives three new sets of linearly independent basis vectors (the columns of $U, V,$ and $X$) so that the two matrices $A$ and $B$ are diagonal when transformed to these new bases.
We note that only the reduced GSVD is needed, so that we can assume that $U \in \R^{m \times n}$, $V \in \R^{d \times n}$, and $\Gamma$ and $\Sigma$ are $n \times n$.

Our analysis is based on the following formulation of the GSVD presented in \cite{van1985}. Let $Y := X^{-T}$ in the GSVD of \eqref{gsvd}, then $A = U \Gamma Y^T$ and $B = V \Sigma Y^T$. Let us characterize matrix $Y$. (In fact, Matlab's {\tt gsvd} routine renders $Y$ instead of $X$.) Since
\begin{equation}\label{eq:mgsvd}
  A=U \, \Gamma \, Y^T, \qquad B=V \, \Sigma \, Y^T,
\end{equation}
this implies that we have the following congruence transformations
\[A^T\!A=Y(\Gamma^T\Gamma)Y^T, \qquad B^T\!B=Y(\Sigma^T\Sigma)Y^T.\]
From the above, it follows that $A^T\!A$ has the same inertia as $\Gamma^T\Gamma$ and the same holds for $B^T\!B$ and $\Sigma^T\Sigma$ (here this mainly gives information on the number of zero eigenvalues).
We also see that, provided $A$ and $B$ are of full-rank, these similarity transformations hold:
\begin{equation}
\label{eqn:matrix_Y}
\begin{aligned}
 (B^T\!B)(A^T\!A)^{-1} &=Y(\Sigma^T\Sigma)(\Gamma^T\Gamma)^{-1}Y^{-1} = Y\,\text{diag}(\sigma_i^2/\gamma_i^2)\, Y^{-1},\\
 (A^T\!A)(B^T\!B)^{-1} &=Y(\Gamma^T\Gamma)(\Sigma^T\Sigma)^{-1}Y^{-1}=Y\,\text{diag}(\gamma_i^2/\sigma_i^2)\, Y^{-1}.
\end{aligned}
\end{equation}
The columns of the matrix $Y$ are therefore the eigenvectors for both $(A^T\!A)(B^T\!B)^{-1}$ and its inverse $(B^T\!B)(A^T\!A)^{-1}$. The GSVD avoids the explicit formation of the cross-product matrices $A^T\!A$ and $B^T\!B$ (see also \cref{exp:3}).

\textbf{Truncated GSVD.}
In some practical applications it could be of interest to approximate both matrices $(A, B)$ by other matrices $(A_k, B_k)$, said truncated, of a specific rank $k$. To define the truncated GSVD (TGSVD) let us partition the following matrices
\begin{equation}\label{eq:pgsvd}
U = [U_k \ \, \widehat{U}], \ V = [V_k \ \, \widehat{V}], \ Y = [Y_k \ \, \widehat{Y}], \ \Gamma = \text{diag} (\Gamma_k, \widehat{\Gamma}), \ \Sigma = \text{diag} (\Sigma_k, \widehat{\Sigma}).
\end{equation}
For use in \cref{sec:GCUR}, we define TGSVD for $(A,B)$ as (cf.~\cite[(2.34)]{hansen})
\begin{equation}
    \label{eq:tgsvd}
    A_k := U_k \Gamma_k Y_k^T, \qquad  B_k := V_k \Sigma_k Y_k^T,
\end{equation}
where $k < n$.
then it follows that $A - A_k = \widehat{U} \, \widehat{\Gamma} \, \widehat{Y}^T.$
The following proposition is useful for understanding the error bounds for the GCUR. In line with \cite[p.~495]{pchansen},
let $\psi_i(A)$ and $\psi_i(Y)$ be the singular values of matrix $A$ and $Y$, respectively (cf.~also \eqref{svd}). The first and second statements of the following proposition are from \cite{pchansen}; while the third statement may not be present in the literature yet, it is straightforward.
\begin{proposition}\label{pp1}
Let $A = U\, \Gamma \, X^{-1} = U \, \Gamma \, Y^T$ as in \eqref{gsvd}, with $Y = X^{-T}$, then for $i=1,\dots,n$ (see, e.g., \cite[pp.~495--496]{pchansen})
\[\gamma_i\cdot\psi_{\min}(Y)\leq \psi_i(A)=\psi_i(U \, \Gamma \, Y^T) \leq \psi_i(\Gamma) ~ \norm{Y}=\gamma_i\cdot\norm{Y}\]
so
\[ \frac{\psi_i(A)}{\|Y\|} \le
\gamma_i= \psi_i(\Gamma) =\psi_i(U^T\!A Y^{-T}) \leq \psi_i(A)~\norm{Y^{-1}}.\]
Moreover,
\[ {\gamma_{k+1}}\cdot\psi_{\min}(\widehat{Y}) \leq \norm {A - A_k} \leq  {\gamma_{k+1}}\cdot \norm{\widehat{Y}}.  \]
\end{proposition}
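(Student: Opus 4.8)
The plan is to recognize that the truncation error $A - A_k$ has exactly the same multiplicative structure as $A$ itself, so the third bound should fall out of the first statement of the proposition applied to the trailing blocks. From the partition \eqref{eq:pgsvd} we already have $A - A_k = \widehat U\,\widehat\Gamma\,\widehat Y^T$, and since $\|A - A_k\|$ is the largest singular value I would set $i = 1$ and reuse the reasoning behind the first inequality chain. Because $\widehat U$ has orthonormal columns (it is a column submatrix of $U$), left-multiplication by $\widehat U$ preserves singular values, so $\|A - A_k\| = \|\widehat\Gamma\,\widehat Y^T\|$. The ordering convention on the $\gamma_i$ is what ties things to $\gamma_{k+1}$: since $\gamma_{k+1} \ge \gamma_{k+2}\ge\cdots\ge\gamma_n$, the largest diagonal entry of $\widehat\Gamma$ is precisely $\gamma_{k+1}$, i.e.\ $\psi_1(\widehat\Gamma) = \|\widehat\Gamma\| = \gamma_{k+1}$.

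For the upper bound I would simply invoke submultiplicativity, $\|\widehat\Gamma\,\widehat Y^T\| \le \|\widehat\Gamma\|\,\|\widehat Y^T\| = \gamma_{k+1}\,\|\widehat Y\|$. The lower bound is where the argument behind the first statement cannot be copied verbatim, and this is the step I expect to be the main obstacle: there, invertibility of the square matrix $Y$ was used to write $\Gamma = (\Gamma Y^T)(Y^T)^{-1}$, whereas here $\widehat Y$ is $n\times(n-k)$ and so $\widehat Y^T$ is not square. The fix is to replace the inverse by a one-sided inverse. Since $Y$ is nonsingular, its column submatrix $\widehat Y$ has full column rank, hence $\widehat Y^T$ has full row rank and admits the right inverse $(\widehat Y^T)^+$ with $\widehat Y^T(\widehat Y^T)^+ = I_{n-k}$ and $\|(\widehat Y^T)^+\| = 1/\psi_{\min}(\widehat Y)$.

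With this in hand I would write $\widehat\Gamma = (\widehat\Gamma\,\widehat Y^T)(\widehat Y^T)^+$ and apply submultiplicativity to the largest singular value, obtaining
\[
\gamma_{k+1} = \psi_1(\widehat\Gamma) = \|\widehat\Gamma\| \le \|\widehat\Gamma\,\widehat Y^T\|\,\|(\widehat Y^T)^+\| = \frac{\|A - A_k\|}{\psi_{\min}(\widehat Y)},
\]
which rearranges to $\gamma_{k+1}\,\psi_{\min}(\widehat Y) \le \|A - A_k\|$ and closes the chain. An alternative I would keep in reserve, should one prefer to avoid the pseudoinverse, is to exhibit an explicit near-optimal test vector: choose $z$ with $\widehat Y^T z$ proportional to the top singular vector $e_1$ of $\widehat\Gamma$ and of minimal norm $\le 1/\psi_{\min}(\widehat Y)$, so that $\|\widehat\Gamma\,\widehat Y^T z\|/\|z\| \ge \gamma_{k+1}\,\psi_{\min}(\widehat Y)$. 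Both routes rely only on the full column rank of $\widehat Y$, which is guaranteed by the nonsingularity of $Y$, so the ``straightforward'' claim in the text is justified once the rectangular case is handled with a right inverse rather than an inverse.
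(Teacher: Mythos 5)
Your proof is correct and is essentially the paper's argument: the paper disposes of all three statements in one line by citing the product singular-value inequality $\psi_i(M)\,\psi_{\min}(N)\le\psi_i(MN)\le\psi_i(M)\,\norm{N}$ and applying it to $A=U\,\Gamma\,Y^T$ and $A-A_k=\widehat U\,\widehat\Gamma\,\widehat Y^T$, which is exactly what you do. The only difference is that you treat the rectangular factor $\widehat Y^T$ as a special obstacle and re-prove the lower bound via the right inverse $(\widehat Y^T)^+$ with $\norm{(\widehat Y^T)^+}=1/\psi_{\min}(\widehat Y)$; that is a valid (and slightly more self-contained) derivation of the same cited inequality, which already holds for full-row-rank rectangular factors.
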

\begin{proof}
This follows from \eqref{eq:mgsvd} and the well-known property that, for the product of two matrices we have $\psi_i(A) \, \psi_{\min}{(B)} \le \psi_i(AB) \leq \psi_i(A) \, \norm{B}$ (see, e.g., \cite[p.~89]{Householder}).
\end{proof}

The results above are relevant tools for the analysis and understanding of generalized CUR and its error bounds which we will introduce in \cref{sec:GCUR}. 
\section{Discrete empirical interpolation method}\label{sec:DEIM} We now summarize the tool from existing literature \cite{Sorensen, Chaturantabut} that we use to select columns and/or rows from matrices. Besides the GSVD, the DEIM algorithm plays an important role in the proposed method. The DEIM procedure works on the columns of a specified basis vectors sequentially. The basis vectors must be linearly independent. Assuming we have a full-rank basis matrix $U \in \R^{m\times k}$ with $k\le m$, to select $k$ rows from $U$, the DEIM procedure constructs an index vector $\bss\in \N^k$ such that it has non-repeating values in $\{1,\dots,m\}$. Defining the selection matrix $S$ as an $m\times k$ identity matrix indexed by $\bss$, i.e., $S=I(:,\bss)$ and $\bx(\bss) = S^T\bx$ (cf.~\cite{Sorensen}), we have an {\em interpolatory projector} defined through the DEIM procedure as
\[\mathbb{S}=U(S^TU)^{-1}S^T.\]
We can show that $S^TU$ is nonsingular (see \cite[Lemma~3.2]{Sorensen}). The term ``interpolatory projector" stems from the fact that for any $\bx\in \R^m$ we have
\[(\mathbb{S}\bx)(\bss)=S^T\mathbb{S}\bx=S^TU(S^TU)^{-1}S^T\bx=S^T\bx=\bx(\bss),\]
implying the projected vector $\mathbb{S}\bx$ matches $\bx$ in the $\bss$ entries \cite{Sorensen}. 

To select the indices contained in $\bss$, the columns of $U$ are considered successively. The first interpolation index corresponds to the index of the entry with the largest magnitude in the first basis vector. The rest of the interpolation indices are selected by removing the direction of the interpolatory projection in the previous basis vectors from the subsequent one and finding the index of the entry with the largest magnitude in the residual vector. The index selection using DEIM is limited by the rank of the basis matrix, i.e., the number of indices selected can be no more than the number of vectors available.

To form $\bss$, let $\bu_j$ denote the $j$th column of $U$ and $U_j$ be the matrix of the first $j$ columns of $U$. Similarly, let $\bss_j$ contain the first $j$ entries of $\bss$, and let $S_j=I(:,\bss_j)$. More precisely, we define $s_1$ such that
$|\bu_1(s_1)|=\norm{\bu_1}_\infty$ and the $j$th interpolatory projector $\mathbb{S}_j$ as 
\[\mathbb{S}_j=U_j(S_j^TU_j)^{-1}S^T_j.\]
To select $s_j$, remove the $\bu_{j-1}$ component from $\bu_j$ by projecting $\bu_j$ onto indices \{$s_1$, \dots, $s_{j-1}$\}, thus 
\[\br_j=\bu_j-\mathbb{S}_{j-1}\bu_j,\]
then take the index of the entry with the largest magnitude in the residual, i.e., $s_j$ such that  
\[|\br_j(s_j)|=\norm{\br_j}_\infty.\]
As noted in \cite{Chaturantabut}, in case of a tie e.g., $|(\br_j)_i| = |(\br_j)_l|$ for $i \ne l$, the smaller index is picked. As in DEIM-induced CUR decomposition \cite[p.~A1458]{Sorensen}, this process will never produce duplicate indices. In a nutshell, we find the indices via a non-orthogonal Gram--Schmidt-like process (oblique projections) on the $\bu$-vectors. Since the input vectors are linearly independent, the residual vector $\br$ is guaranteed to be nonzero. This DEIM algorithm forces the selection matrix $S$ to find $k$ linearly independent rows of $U$ such that the local growth of $\norm{(S^TU)^{-1}}$ is kept modest via a greedy search \cite[p.~2748]{Chaturantabut} as implemented in \cref{alg:CUR-DEIM}.

\begin{tcbverbatimwrite}{tmp_\jobname_alg.tex}
\begin{algorithm}
\caption{DEIM index selection \cite{Sorensen}}
\label{alg:CUR-DEIM}
 {\bf Input:} $U \in \R^{m \times k}$, with $k\le m$ (linearly independent columns)\\
 {\bf Output:} Indices $\bss \in \N^k$ with distinct entries in $\{1,\dots,m\}$ 
\begin{algorithmic}[1]
	\STATE{$\bu = U(:,1)$}
	\STATE{ $s_1 = \argmax_{1\le i\le m}~|(\bu)_i|$}
	\FOR{ $j = 2, \dots, k$ }
	\STATE{$\bu = U(:,j)$ }
	\STATE{$\bc=U(s,1:j-1)^{-1}\bu(s)$}
	\STATE{$\br=\bu-U(:,1:j-1)\,\bc$}
	\STATE{$s_j$ = $\argmax_{1\le i\le m}~|(\br)_i|$}
	\STATE $\bss=[\bss \ \ s_j]$
	\ENDFOR
\end{algorithmic}
\end{algorithm}
\end{tcbverbatimwrite}

\input{tmp_\jobname_alg.tex}

Although the DEIM index selection procedure is basis-dependent, if the interpolation indices are determined, the DEIM interpolatory projector is independent of the choice of basis spanning the space \text{Range}$(U)$.
\begin{proposition}\label{pp2}(\cite[Def.~3.1, (3.6)]{Chaturantabut} ). 
Let $Q$ be an orthonormal basis of \text{Range}$(U)$ where $Q_i = [\bq_1,\dots,\bq_i]$ for $1 \leq i \leq k$, then
\[U(S^T U)^{-1}S^T = Q(S^T Q)^{-1}S^T.\]
\end{proposition}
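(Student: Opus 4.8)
The plan is to exploit the fact that $U$ and $Q$ are two bases for the \emph{same} $k$-dimensional column space, so they differ only by an invertible change-of-basis matrix. Concretely, since $U \in \R^{m\times k}$ has linearly independent columns and $Q$ is a full-column-rank orthonormal basis with $\text{Range}(Q) = \text{Range}(U)$, there is a unique nonsingular $T \in \R^{k\times k}$ with $U = QT$. I would establish this first: each column of $U$ lies in $\text{Range}(Q)$ and hence is a linear combination of the columns of $Q$, which yields $U = QT$ for some $T$; and $T$ must be invertible because both $U$ and $Q$ have rank $k$ (an equation $Tz=0$ would force $Uz=0$, contradicting the independence of the columns of $U$).

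Next I would check that the right-hand side is well-defined, i.e., that $S^TQ$ is nonsingular. We already know $S^TU$ is nonsingular (see \cite[Lemma~3.2]{Sorensen}); substituting $U=QT$ gives $S^TU = (S^TQ)\,T$, and since $T$ is invertible this forces $S^TQ$ to be invertible as well. Thus both interpolatory projectors appearing in the statement are meaningful.

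The final step is a direct substitution and cancellation. Writing $U=QT$,
\[ U\,(S^TU)^{-1}S^T = (QT)\,\bigl((S^TQ)\,T\bigr)^{-1}S^T = QT\,T^{-1}(S^TQ)^{-1}S^T = Q\,(S^TQ)^{-1}S^T, \]
where the middle equality uses $(MT)^{-1}=T^{-1}M^{-1}$ for the invertible matrices $M=S^TQ$ and $T$. This proves the claim.

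The main obstacle -- which is quite mild here -- is simply confirming the invertibility of $S^TQ$ so that $Q\,(S^TQ)^{-1}S^T$ makes sense; everything else is bookkeeping with the change-of-basis matrix $T$, whose introduction is the one genuinely substantive idea. The identity is intuitively clear because $U\,(S^TU)^{-1}S^T$ is the oblique projector onto $\text{Range}(U)$ determined by the selected rows in $s$, and this projector depends only on the subspace $\text{Range}(U)$ together with the index set $s$, not on the particular basis chosen to represent the subspace.
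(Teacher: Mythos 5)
Your proof is correct: the change-of-basis identity $U=QT$ with $T=Q^TU$ nonsingular, the observation that $S^TU=(S^TQ)T$ transfers invertibility to $S^TQ$, and the cancellation $TT^{-1}$ constitute exactly the standard argument for basis-independence of the interpolatory projector. The paper itself gives no proof and simply cites \cite[Def.~3.1, (3.6)]{Chaturantabut}, where the argument is essentially the one you wrote, so there is nothing to add.
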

This proposition allows us to take advantage of the special properties of an orthonormal matrix in cases where our input basis matrix is not (see \cref{pp4}).
\section{Generalized CUR decomposition and its approximation properties}\label{sec:GCUR}
In this section we describe the proposed generalized CUR decomposition and provide a theoretical analysis of its error bounds.
\subsection{Generalized CUR decomposition}
We now introduce a new generalized CUR decomposition of matrix pairs $(A, B)$, where $A$ is $m \times n$ $(m\ge n)$ and $B$ is $d \times n$ $(d\ge n)$, and $B$ is of full rank. This GCUR is inspired by the truncated generalized singular value decomposition for matrix pairs, as reviewed in \cref{sec:GSVD}. We now define a generalized CUR decomposition (cf.~\eqref{cur}).
\begin{definition} \label{Dfn4}
Let $A$ be $m \times n$ and $B$ be $d \times n$ and of full rank, with $m\ge n$ and $d\ge n$.
A generalized CUR decomposition of $(A, B)$ of rank $k$ is a matrix approximation of $A$
and $B$ expressed as
\begin{equation}
  \label{eq:gcur}
  \begin{aligned}
 A_k := C_A\, M_A\, R_A = AP \, M_A \, S_A^TA ~ ,  \\
 B_k := C_B\, M_B\, R_B = BP \, M_B \, S_B^TB. 
\end{aligned}
\end{equation}
Here $S_A \in \R^{m \times k}$, $S_B \in \R^{d \times k}$, and $P \in \R^{n \times k}$ are index selection matrices $(k < n)$. 
\end{definition}
It is key that {\em the same} columns of $A$ and $B$ are selected;
this gives a coupling between the decomposition of $A$ and $B$.

The matrices $C_A, C_B$ and $R_A, R_B$ are subsets of the columns and rows, respectively, of the original matrices. In the rest of the paper, we will mainly focus on the matrix $A$; we can perform a similar analysis for the matrix $B$ (see also the comments at the end of this section). As in \cref{sec:DEIM}, we again have the vectors $\bss_A, \bp$ as the indices of the selected rows and columns such that $C_A = AP$ and $R_A = S_A^TA$, where $S_A=I(:,\bss_A)$ and $P=I(:,\bp)$. The choice of $\bp$ and $\bss_A$ is based on the transformation matrices from the rank-$k$ truncated GSVD.

Given $P$ and $S_A$, the middle matrix $M_A$ can be constructed in different ways to satisfy certain desirable approximation properties. In \cite{Sorensen}, the authors show how setting $M=A(\bss,\bp)^{-1}$ leads to a CUR decomposition corresponding to the $\bp$ columns and $\bss$ rows of $A$. Instead, following these authors \cite{Sorensen} and others \cite{Mahoney,Stewart}, we choose to construct the middle matrix $M_A$ as $(C_A^T C_A)^{-1}C_A^T A R_A^T(R_A R_A^T )^{-1}$. This option, as shown by Stewart \cite{Stewart}, minimizes $\norm{A-CMR}$ for a given $\bp$ and $\bss$. Computing the middle matrix as such yields a decomposition that can be viewed as first projecting the columns of $A$ onto $\text{Range}(C)$ and then projecting the result onto the row space of $R$, both steps being optimal for the 2-norm error. 

The following proposition establishes a connection between the DEIM-GCUR of $(A, B)$ and the DEIM-CUR of $AB^{-1}$ and $AB^+$ for a square and nonsingular $B$ and a nonsquare but full-rank $B$, respectively. 
\begin{proposition}\label{pp3} (i) If $B$ is a square and nonsingular matrix, then the selected row and column indices from the CUR decomposition of $AB^{-1}$ are the same as index vectors $\bss_A$ and $\bss_B$ obtained from the GCUR decomposition of $(A, B)$, respectively.

(ii) Moreover, in the special case where $B=I$, the GCUR decomposition of $A$ coincides with the CUR decomposition of $A$, in that the factors $C$ and $R$ of $A$ are the same for both methods: the first line of \eqref{eq:gcur} is equal to \eqref{cur}.

(iii) In addition, if $B$ is nonsquare but of a full rank, we have a connection as in (i) between the indices from the CUR decomposition of $AB^+$ and the index vectors $s_A$ and $s_B$ obtained from the GCUR decomposition of $(A, B)$.
\end{proposition}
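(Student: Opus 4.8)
The plan is to reduce all three statements to a single algebraic identity that exhibits an SVD of $AB^{-1}$ (resp.\ $AB^+$) directly in terms of the GSVD factors of $(A,B)$, and then to invoke the fact that DEIM selects row indices from the left singular vectors and column indices from the right singular vectors. Throughout I use that the GCUR computes $s_A$ by running DEIM on $U_k$ and $s_B$ by running DEIM on $V_k$, and $p$ by running DEIM on $Y_k$.

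First I would start from the formulation \eqref{eq:mgsvd}, $A = U\,\Gamma\,Y^T$ and $B = V\,\Sigma\,Y^T$. For square nonsingular $B$ the matrix $V$ is orthogonal and $\Sigma$ is invertible (since $B = V\Sigma Y^T$ with $V,Y$ nonsingular forces every $\sigma_i\neq 0$), so a direct computation gives
\[
AB^{-1} = U\,\Gamma\,Y^T\,(V\,\Sigma\,Y^T)^{-1} = U\,\Gamma\,Y^T Y^{-T}\Sigma^{-1}V^T = U\,(\Gamma\,\Sigma^{-1})\,V^T.
\]
Because $U$ and $V$ have orthonormal columns, $\Gamma\Sigma^{-1} = \text{diag}(\gamma_i/\sigma_i)$ is nonnegative, and our ordering convention keeps the ratios $\gamma_i/\sigma_i$ nonincreasing, this is an SVD of $AB^{-1}$ with left singular vectors $U$ and right singular vectors $V$; its $k$ dominant left and right singular vectors are $U_k$ and $V_k$. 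Hence the row indices of the DEIM-CUR of $AB^{-1}$ (selected from $U_k$) coincide with $s_A$, and the column indices (selected from $V_k$) coincide with $s_B$. The only caveat is that the singular vectors are determined up to sign; since DEIM selects the entry of largest magnitude, a sign flip of a column leaves the chosen index unchanged (a special case of the scaling invariance I use below), so this ambiguity is harmless.

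For the nonsquare full-rank case I would repeat the same steps with $B^+$ in place of $B^{-1}$. Here $\Sigma$ is again invertible because $\text{rank}(B)=\text{rank}(\Sigma)=n$, and a short computation using $B^TB = Y\Sigma^2 Y^T$ gives $B^+ = (B^TB)^{-1}B^T = Y^{-T}\Sigma^{-1}V^T$, whence
\[
AB^+ = U\,\Gamma\,Y^T Y^{-T}\Sigma^{-1}V^T = U\,(\Gamma\,\Sigma^{-1})\,V^T,
\]
identical in form to the square case. The conclusion on the index sets then follows verbatim.

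Finally, for $B = I$ I would specialize the identity: $B = V\Sigma Y^T = I$ forces $Y = V\Sigma^{-1}$, so $A = U\,(\Gamma\Sigma^{-1})\,V^T$ is the ordinary SVD of $A$ with $W=U$ and $Z=V$. The row selection matches immediately since $s_A$ comes from $U_k = W_k$. The only point requiring a separate argument is the column selection, and I expect this to be the main obstacle: the standard CUR of $A$ runs DEIM on $Z_k = V_k$, whereas the GCUR runs DEIM on $Y_k = V_k\Sigma_k^{-1}$. I would settle it with the one genuinely new ingredient, namely that DEIM index selection is invariant under right multiplication of its input basis by a nonsingular diagonal matrix: the interpolatory projector $U_{j-1}(S_{j-1}^TU_{j-1})^{-1}S_{j-1}^T$ is unchanged when $U_{j-1}$ is replaced by $U_{j-1}D_{j-1}$, so each residual is merely rescaled and its entry of largest magnitude occurs at the same position. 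Applying this with $D=\Sigma_k^{-1}$ shows the GCUR column indices $p$ equal the CUR column indices, so $P$, $S_A$, and the optimally constructed $M_A$ coincide, and the first line of \eqref{eq:gcur} reduces to \eqref{cur}.
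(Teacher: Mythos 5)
Your proof is correct and follows essentially the same route as the paper: both rest on the identity $AB^{-1}=U\,\Gamma\Sigma^{-1}V^T$ (and $AB^{+}=U\,\Gamma\Sigma^{-1}V^T$ via $B^{+}=Y^{-T}\Sigma^{-1}V^T$) and on $Y=V\Sigma^{-1}$ when $B=I$. Your justification of the $B=I$ case is in fact slightly more careful than the paper's, which merely asserts that the largest entries of the columns of $V$ and $Y=V\Sigma^{-1}$ occur at the same indices, whereas you correctly note that the DEIM residuals at every step are only rescaled because the interpolatory projector is invariant under right multiplication by a nonsingular (diagonal) matrix.
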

\begin{proof} (i) We start with the GSVD \eqref{eq:mgsvd}. If $B$ is square and nonsingular, then the SVD of $G=AB^{-1}$ can be expressed in terms of the GSVD of $(A,B)$, and is equal to $G = U (\Gamma\Sigma^{-1}) V^T$ \cite{matrixcomp}.
Therefore, the row index selection matrix from the SVD of $G$ is equal to $S_A$ from the GSVD of $(A,B)$; and similarly the column index selection matrix obtained from the SVD of $G$ is equal to $S_B$, since they are determined using $U$ and $V$, respectively. 

(ii) If $B=I$, then from the second line of \eqref{eq:mgsvd} we have that $Y=V\Sigma^{-1}$. This implies that the index of the largest entries in the columns of $Y$ are the same as that of $V$.
In this special case of $B=I$, we have $G = A$, so then the left and right singular vectors of $A$ are contained in the $U$ and $V$ matrices from the GSVD of $(A,I)$, respectively. Hence the selection matrix $P$ in \eqref{eq:gcur} obtained by performing DEIM on $Y$ is the same as the selection matrix $P$ in \eqref{cur} obtained by applying DEIM to the right singular vectors of $A$.

(iii) If $B$ is nonsquare but of full rank $n$, then we still have a similar connection between the GSVD of $(A,B)$ and the SVD of $AB^+$ because of the following. Since the factors in the reduced GSVD $B = V\Sigma Y^T$ are of full rank, we have $B^+ = Y^{-T} \Sigma^{-1} V^T$. This means that $AB^+ = U \Gamma \Sigma^{-1} V^T$, so the index vectors $\bss_A$ and $\bss_B$ from GCUR of $(A,B)$ are equivalent to the selected column and row indices from CUR of $AB^+$, respectively.
\end{proof}

It is worth noting that \cref{pp3} holds for DEIM-based CUR and GCUR algorithms. For alternative ways of constructing CUR and GCUR decompositions (see \cref{sec:Con}) these properties may not hold. 



Although we can obtain indices of a CUR decomposition of $AB^{-1}$ using the GCUR of $(A, B)$, the converse does not hold. We emphasize that we need the GSVD for the GCUR decomposition and cannot use the SVD of $AB^{-1}$ or $AB^+$ instead, since the GCUR decomposition requires the $Y$ matrix from \eqref{eq:tgsvd} to find the column indices. While we used the generalized singular vectors here, in principle one could use other vectors, e.g., an approximation to the generalized singular vectors.

To build the decomposition, it is relevant to know the dominant rows and columns of $A$ and $B$ in their rank-$k$ approximations. Given that $A_k$ and $B_k$ are rank-$k$ approximations of $A$ and $B$, respectively, how should the columns and rows be selected? \Cref{alg:GCUR-DEIM} is a summary of the procedure. (The backslash operator used in \cref{alg:GCUR-DEIM} is a Matlab type notation for solving linear systems and least-squares problems.)
We note that we can parallelize the work in \cref{line3,line4,line5,line6,line7,line8} since it consists of three independent runs of DEIM. Also, if we are only interested in approximating the matrix $A$ from the pair $(A, B)$, we can omit \cref{line5,line8} as well as the second part of \cref{line9}; thus saving computational cost. 

In some applications, one might be interested in a {\em generalized interpolative decomposition}, of which the column and row versions are of the form 
\begin{equation}\label{eq:ID}
A \approx C_A\wt M_A, \ B \approx C_B\wt M_B
\qquad \text{or} \qquad
A\approx \wh M_A R_A, \ B\approx \wh M_B R_B.
\end{equation}
Here $\wt M_A=C_A^+\!A$ is $k \times n$ and $\wh M_A=AR^+_A$ is $m \times k$; similar remarks hold for $\wt M_B$ and $\wh M_B$. As noted in \cite{Sorensen}, since the DEIM index selection algorithm identifies the row and column indices independently, this form of decomposition is relatively straightforward.

In terms of computational complexity, the dense GSVD method requires $\calo((m+d)n^2)$ work and the three runs of DEIM together require $\calo((m+n+d)k^2)$ work, so the overall complexity of the algorithm is dominated by the construction of the GSVD. (This might suggest iterative GSVD approaches; see \cref{sec:Con}.) 

\begin{tcbverbatimwrite}{tmp_\jobname_alg2a.tex}
\begin{algorithm}
\caption{DEIM type GCUR decomposition}
\label{alg:GCUR-DEIM}
 {\bf Input:} $A \in \R^{m \times n}$, $B \in \R^{d \times n}$ (where $m \ge n$ and $d \ge n$), desired rank $k$ \\
{\bf Output:} A rank-$k$ generalized CUR decomposition \\
$A_k = A(:,\bp) \, \cdot \, M_A \, \cdot \, A(\bss_A,:)$, \quad
$B_k = B(:,\bp) \, \cdot \, M_B \, \cdot \, B(\bss_B,:)$
\begin{algorithmic}[1]
\setcounter{ALC@unique}{0}
	\STATE{$[U, V, Y] = {\sf gsvd}(A,B)$ \hfill (according to nonincreasing generalized singular values)}
	\FOR{ $j = 1, \dots, k$ }
	\STATE\label{line3}{ $\bp(j) = \argmax_{1\le i\le n}~|(Y(\,:,j))_i|$ \quad \hfill (Iteratively pick indices)}
	\STATE\label{line4}{$\bss_A(j) = \argmax_{1\le i\le m}~|(U(\,:,j))_i|$}
	\STATE\label{line5}{$\bss_B(j) = \argmax_{1\le i\le d}~|(V(\,:,j))_i|$}\\
	 \quad \hfill (Update new columns) \\
	\STATE\label{line6}{$Y(\,:,~j+1) = Y(\,:,~j+1)-Y(:,~1:j)\cdot (Y(\bp,1:j)\ \backslash  \ Y(\bp,~j+1)$)}
	\STATE\label{line7}{$U(\,:,~j+1) = U(\,:,~j+1)-U(:,~1:j)\cdot (U(\bss_A,~1:j)\ \backslash  \ U(\bss_A,~j+1)$)}
	\STATE\label{line8}{$V(\,:,~j+1) = V(\,:,~j+1)-V(:,~1:j)\cdot (V(\bss_B,~1:j)\ \backslash \ V(\bss_B,~j+1)$)} 
	\ENDFOR
	\STATE\label{line9}{ $M_A = A(\,:,\bp) \ \backslash \ (A \ / \ A(\bss_A,:\,))$, \quad $M_B = B(\,:,\bp) \ \backslash \ (B \ / \ B(\bss_B,:\,))$}
\end{algorithmic}
\end{algorithm}
\end{tcbverbatimwrite}

\input{tmp_\jobname_alg2a.tex}

The pseudocode in \cref{alg:GCUR-DEIM} assumes the matrices from the GSVD (i.e., $U$, $V$, and $Y$) corresponds to a nonincreasing order of the generalized singular values. 

In generalizing the DEIM-inspired CUR decomposition, we also look for a generalization of the related theoretical results. While the results presented in \cite{Sorensen} express the error bounds in terms of the optimal rank-$k$ approximation, for our generalized CUR factorization, the most relevant quantity is the rank-$k$ GSVD approximation. In the following subsection, we present theoretical results for bounding the GCUR approximation error. 

\subsection{Error Bounds in terms of the SVD approximation} The error bounds for any rank-$k$ matrix approximation are usually expressed in terms of the rank-$k$ SVD approximation error. We will show a result of this type in the following proposition and also discuss its limitations. 
We introduce the following notation: let $A = W \Psi Z^T = W_k \Psi_k Z_k^T + W_{\perp} \Psi_{\perp} Z_{\perp}^T$ be the SVD of $A$ (see \eqref{svd}), where $Z_k$ contains the largest $k$ right singular vectors.
Let $Q_k$ be an $n \times k$ matrix with orthonormal columns. It turns out in both \cite{Sorensen} and this section that $\norm{A(I-Q_kQ_k^T)}$ is a central quantity in the analysis. In the DEIM-induced CUR decomposition work \cite{Sorensen}, we take the right singular vectors to be $Q_k$, but here we study this quantity for general $Q_k$. In our context, we are particularly interested in $Q_k$ as the orthogonal basis of the matrix $Y_k$ in \eqref{eq:tgsvd}.
Denote $\calq_k = \text{span}(Q_k)$ and $\calz_k = \text{span}(Z_k)$. Recall that the $\psi_i(A)$ are the singular values of $A$.

\begin{proposition}
Let $Q_k$ be an $n \times k$ matrix with orthonormal columns, and let $Z_k$ contain the largest $k$ right singular vectors of $A$. Then
\[
\psi_{k+1}^2(A) \le \|A(I-Q_kQ_k^T)\|^2 \le
\psi_{k+1}^2(A) + \|A\|^2 \cdot \sin^2(\calz_k, \calq_k).
\]
More precisely, we have
\[
\|A(I-Q_kQ_k^T)\|^2 \le \psi_{k+1}^2(A) + 
\sum_{j=1}^k \psi_j(A)^2 \cdot \sin^2(\bz_j, \calq_k).
\]
\end{proposition}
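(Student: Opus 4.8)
The plan is to reduce the spectral-norm quantity to a pointwise estimate over unit vectors orthogonal to $\calq_k$ and then expand in the right singular basis of $A$. Write $P := I - Q_kQ_k^T$ for the orthogonal projector onto $\calq_k^\perp$, a subspace of dimension $n-k$. Since $P$ acts as the identity on $\calq_k^\perp$ and annihilates $\calq_k$, a short argument shows that $\norm{AP} = \max\{\norm{A\bv} : \bv \in \calq_k^\perp,\ \norm{\bv}=1\}$. The lower bound $\psi_{k+1}^2(A) \le \norm{AP}^2$ is then immediate from the Courant--Fischer min--max characterization $\psi_{k+1}^2(A) = \lambda_{k+1}(A^T\!A) = \min_{\dim \mathcal S = n-k}\ \max_{\bv\in\mathcal S,\,\norm{\bv}=1} \norm{A\bv}^2$, applied to the particular subspace $\mathcal S = \calq_k^\perp$.

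For the upper bounds I would fix a unit vector $\bv \in \calq_k^\perp$ and work in the orthonormal basis $\{\bz_1,\dots,\bz_n\}$ of right singular vectors of $A$, which diagonalizes $A^T\!A$. Parseval then gives the identity
\[
\norm{A\bv}^2 \ = \ \sum_{i=1}^n \psi_i^2(A)\,(\bz_i^T\bv)^2 \ = \ \sum_{j=1}^k \psi_j^2(A)\,(\bz_j^T\bv)^2 \ + \ \sum_{i=k+1}^n \psi_i^2(A)\,(\bz_i^T\bv)^2,
\]
so that no cross terms survive and the leading and trailing singular directions decouple automatically. The trailing sum is controlled by $\psi_i(A)\le\psi_{k+1}(A)$ for $i\ge k+1$ together with $\sum_{i}(\bz_i^T\bv)^2 = \norm{\bv}^2 = 1$, which yields $\sum_{i>k}\psi_i^2(A)\,(\bz_i^T\bv)^2 \le \psi_{k+1}^2(A)$.

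The heart of the argument is the leading sum, where the orthogonality $\bv\perp\calq_k$ enters. Since $Q_kQ_k^T\bz_j \in \calq_k$ is orthogonal to $\bv$, I can replace $\bz_j$ by its component orthogonal to $\calq_k$, obtaining $\bz_j^T\bv = ((I-Q_kQ_k^T)\bz_j)^T\bv$. Cauchy--Schwarz and $\norm{\bv}=1$ then give $|\bz_j^T\bv| \le \norm{(I-Q_kQ_k^T)\bz_j} = \sin(\bz_j,\calq_k)$, the last equality being the definition of the sine of the angle between the unit vector $\bz_j$ and the subspace $\calq_k$. Substituting $(\bz_j^T\bv)^2 \le \sin^2(\bz_j,\calq_k)$ into the leading sum and maximizing over $\bv$ produces exactly the precise bound. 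The coarser bound follows from the same line by first estimating $\psi_j(A)\le\psi_1(A)=\norm{A}$ and then, using $P\bv=\bv$, bounding $\sum_{j\le k}(\bz_j^T\bv)^2 = \norm{Z_k^T\bv}^2 = \norm{((I-Q_kQ_k^T)Z_k)^T\bv}^2 \le \norm{(I-Q_kQ_k^T)Z_k}^2 = \sin^2(\calz_k,\calq_k)$ via the operator norm.

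I expect the only genuinely delicate step to be this angle identification in the leading sum: recognizing that orthogonality to $\calq_k$ lets one discard the in-subspace part of each $\bz_j$ and that the residual norm is precisely $\sin(\bz_j,\calq_k)$. Everything else---the reduction to unit vectors in $\calq_k^\perp$, the Parseval expansion, and the trailing-tail estimate---is routine, and the absence of cross terms in the singular basis is what makes the final bound split cleanly into a $\psi_{k+1}^2(A)$ contribution plus a weighted sum of squared sines.
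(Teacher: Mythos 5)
Your proof is correct, and it rests on the same underlying idea as the paper's: split $A$ via its SVD into the leading part $W_k\Psi_k Z_k^T$ and the trailing part $W_\perp\Psi_\perp Z_\perp^T$, bound the trailing contribution by $\psi_{k+1}^2(A)$, and bound the leading contribution by sines of angles between $\calz_k$ (or the individual $\bz_j$) and $\calq_k$. The execution differs in a way worth noting. The paper works at the operator level, writing $\norm{A(I-Q_kQ_k^T)}^2$ as the sum of the squared norms of the two pieces; for the spectral norm this Pythagorean identity is really only an inequality $\le$ even though the two pieces have orthogonal ranges (the maximizing vectors of the two summands need not coincide), though the inequality goes in the direction needed for the upper bound. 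You instead reduce $\norm{A(I-Q_kQ_k^T)}$ to $\max\{\norm{A\bv}:\bv\in\calq_k^\perp,\ \norm{\bv}=1\}$ and apply Parseval in the right singular basis to a fixed unit vector $\bv\in\calq_k^\perp$, where the splitting $\norm{A\bv}^2=\sum_i\psi_i^2(A)(\bz_i^T\bv)^2$ is a genuine identity; the key step $|\bz_j^T\bv|=|((I-Q_kQ_k^T)\bz_j)^T\bv|\le\sin(\bz_j,\calq_k)$ then cleanly isolates where orthogonality to $\calq_k$ enters. Your route also makes the lower bound explicit via Courant--Fischer applied to the $(n-k)$-dimensional subspace $\calq_k^\perp$, where the paper only remarks that the optimal $\calq_k$ is $\calz_k$. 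In short: same decomposition and same key quantities, but your vector-level argument is slightly more careful and self-contained, at the cost of being a bit longer.
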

\begin{proof}
The lower bound follows from the SVD; the optimal $\calq_k$ is $\calz_k$.
We can derive the upper bounds from
\begin{align*}
\|A(I-Q_kQ_k^T)\|^2 & = \|W_k \Psi_k Z_k^T (I-Q_kQ_k^T)\|^2
+ \|W_{\perp} \Psi_{\perp} Z_{\perp}^T (I-Q_kQ_k^T)\|^2 \\
& \le \|A\|^2 \cdot \sin^2(\calz_k, \calq_k)
+ \psi_{k+1}^2 \cdot \sin^2(\calz_{\perp}, \calq_k) \\
& \le \|A\|^2 \cdot \sin^2(\calz_k, \calq_k) + \psi_{k+1}^2.
\end{align*}
Furthermore, more specifically,
\[
\|A(I-Q_kQ_k^T)\|^2 = \sum_{j=1}^k \psi_j^2(A) \ |\bz_j^T (I-Q_kQ_k^T)|^2
+ \|W_{\perp} \Psi_{\perp} Z_{\perp}^T (I-Q_kQ_k^T)\|^2.\]
\end{proof}

The significance of this result is that $\|A(I-Q_kQ_k^T)\|$ may be close to $\psi_k(A)$ when $\calq_k$ captures the largest singular vectors of $A$ well. For instance, in the standard CUR, $Q_k$ is equivalent to $Z_k$ so the quantity $\sin^2(\calz_k, \calq_k)$ equals 0. If the matrix $B$ from \eqref{eq:mgsvd} is close to the identity or is a scaled identity, we expect that $\sin^2(\calz_k, \calq_k)$ will be approximately zero. However, this sine will generally not be small, as we illustrate by the following example.

\begin{example} {\rm
Let $A = \text{diag}(1,2,3)$, and $B = \text{diag}(1,20,300)$.
Denote by $\be_j$ the $j$th standard basis vector.
Then clearly $Z_1 = \bz_1 = \be_3$, while the largest right generalized singular vector $\bq_1$ is equal to the largest right singular vector of $AB^{-1} = \text{diag}(1, 0.1, 0.01)$, and hence $Q_1 = \bq_1 = \be_1$.
This implies that $\sin(\calz_1, \calq_1) = \sin(\bz_1, \bq_1)$ is large.}
\end{example}

\subsection{Error Bounds in terms of the GSVD approximation}
With the above results in mind, instead of using the rank-$k$ SVD approximation error, we will derive error bounds for $\norm{A-CMR}$ (see \eqref{eq:gcur}) in terms of the error bounds of a rank-$k$ GSVD approximation of $A$ (see \cref{pp1}).
The matrices $C$ and $R$ are of full-rank $k$ determined by the row and column index selection matrices $S$ and $P$, respectively and $M = C^+\!AR^+$.  From \cref{alg:GCUR-DEIM}, we know that $S$ and $P$ are derived using the $k$ columns of the matrices $U$ and $Y$, respectively, corresponding to the largest generalized singular value (see \eqref{eq:tgsvd}).

We use the interpolatory projector given in \cref{pp2}. Therefore instead of $Y$ (see \eqref{eq:mgsvd}), we use its orthonormal basis $Q$, to exploit the properties of an orthogonal matrix.

We will now analyze the approximation error between $A$ and its interpolatory projection $A \mathbb{P}$. The proof of the error bounds for the proposed method closely follows the one presented in \cite{Sorensen}.
The second inequality of the first statement of \cref{pp4} is in \cite[Lemma~4.1]{Sorensen}. The first inequality of the first statement is new but completely analogous. In the second statement, we use the GSVD.
For the analysis, we need the following QR-decomposition of $Y$ (see \eqref{eq:pgsvd}):
\begin{equation} \label{eq:QR}
[Y_k \ \ \wh Y] = Y = QT = [Q_k \ \ \wh Q]
\begin{bmatrix} T_k & T_{12} \\ 0 & T_{22} \end{bmatrix} = [Q_k T_k \ \ Q \wh T],
\end{equation}
where we have defined
\begin{equation} \label{eq:T_hat}
\wh T := \begin{bmatrix} T_{12} \\ T_{22} \end{bmatrix}.
\end{equation}
This implies that
\[
A = A_k + \wh U \, \wh \Gamma \, \wh Y^T = U_k \Gamma_k Y_k^T + \wh U \, \wh \Gamma \, \wh Y^T
= U_k \Gamma_k T_k^T Q_k^T + \wh U \, \wh \Gamma \, \wh T^T Q^T.
\]

\begin{proposition} \label{pp4} (Generalization of \cite[Lemma~4.1]{Sorensen})
Given $A \in \R^{m\times n}$ and $Q_k \in \R^{n\times k}$ with orthonormal columns where $k<n$, let $P \in \R^{n\times k}$ be a selection matrix and $Q_k^TP$ be nonsingular. Let $\mathbb{P} = P(Q_k^TP)^{-1}Q_k^T$, then
\[\psi_{\min}(A(I - Q_kQ_k^T))~\norm{(Q_k^TP)^{-1}}\le \norm{A-A\mathbb{P}} \leq \norm{A(I - Q_kQ_k^T)}~ \norm{(Q_k^TP)^{-1}}.\]
In particular, if $Q_k$ is an orthonormal basis for $Y_k$, the first $k$ columns of $Y$, then
\[
\gamma_{k+1}\cdot \psi_{\min}(T_{22}) \cdot \norm{(Q_k^TP)^{-1}}\le \norm{A-A\mathbb{P}} \le \gamma_{k+1} \cdot \norm{T_{22}} \cdot \norm{(Q_k^TP)^{-1}}.
\]
\end{proposition}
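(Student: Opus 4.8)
The plan is to prove the first (general) statement directly and then specialize to obtain the second. The key object is the interpolatory projector $\mathbb P = P(Q_k^TP)^{-1}Q_k^T$, which I would recognize as an oblique projector onto $\Range(P)$ along the complement determined by $Q_k$. The starting identity should be a decomposition of $A - A\mathbb P$ that separates the part of $A$ lying in $\calq_k$ from the part orthogonal to it. Since $\mathbb P$ acts as the identity on $\Range(Q_k)$ in the sense that $Q_k^T\mathbb P = Q_k^T$ (this follows immediately from $Q_k^TP(Q_k^TP)^{-1} = I$), I would write $A - A\mathbb P = A(I - \mathbb P)$ and then insert $I = Q_kQ_k^T + (I - Q_kQ_k^T)$ to get $A(I-\mathbb P) = A(I - Q_kQ_k^T)(I - \mathbb P)$, using that $A Q_kQ_k^T(I-\mathbb P) = 0$ because $Q_k^T(I-\mathbb P)=0$. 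This is the crucial algebraic simplification: it replaces $A$ by $A(I-Q_kQ_k^T)$.

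Next I would bound the norms. For the upper bound, $\norm{A(I-Q_kQ_k^T)(I-\mathbb P)} \le \norm{A(I-Q_kQ_k^T)}\cdot\norm{I-\mathbb P}$, and then invoke the standard fact that for an oblique projector of this form the complementary projector satisfies $\norm{I-\mathbb P} = \norm{\mathbb P} = \norm{(Q_k^TP)^{-1}}$ (the last equality holds because $P$ and $Q_k$ have orthonormal columns, so the only nontrivial factor is the inverse; this is exactly the argument already used in \cite{Sorensen}). For the lower bound, I would use that $I - \mathbb P$ restricted to $\Range(I - Q_kQ_k^T)$ is injective and relate the smallest singular value of the composite to $\psi_{\min}(A(I-Q_kQ_k^T))$ times the smallest ``gain'' of $(I-\mathbb P)$, which again is governed by $\norm{(Q_k^TP)^{-1}}$; this gives the stated $\psi_{\min}(A(I-Q_kQ_k^T))\norm{(Q_k^TP)^{-1}}$. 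Since the second inequality of the upper bound is stated to already be \cite[Lemma~4.1]{Sorensen}, the genuinely new content is the lower bound, and I would present it as the symmetric analogue.

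For the second statement I would substitute the QR factorization \eqref{eq:QR} of $Y$. Using $A = U_k\Gamma_k T_k Q_k^T + \wh U\,\wh\Gamma\,\wh T Q^T$ from the display preceding the proposition, I compute $A(I-Q_kQ_k^T)$ and observe that the first term vanishes (its row space is spanned by $Q_k$), leaving $A(I-Q_kQ_k^T) = \wh U\,\wh\Gamma\,\wh T Q^T(I-Q_kQ_k^T)$. The orthogonality $Q^T(I-Q_kQ_k^T)$ annihilates the $Q_k$ block and keeps only the $\wh Q$ block, so the surviving factor involves $T_{22}$ rather than the full $\wh T$; together with $\wh\Gamma = \diag(\gamma_{k+1},\dots)$ this yields $\gamma_{k+1}\cdot T_{22}$ as the relevant factor after taking norms and using that $\wh U$, $Q$ have orthonormal columns. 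Feeding $\psi_{\min}(T_{22})\le \norm{A(I-Q_kQ_k^T)}/\gamma_{k+1}\le\norm{T_{22}}$ (after factoring out $\gamma_{k+1}$) into the general bound gives the claimed inequalities.

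The main obstacle I anticipate is the lower bound, specifically justifying that the smallest singular value of the product $A(I-Q_kQ_k^T)(I-\mathbb P)$ factors cleanly as $\psi_{\min}(A(I-Q_kQ_k^T))\cdot\norm{(Q_k^TP)^{-1}}$. Unlike the upper bound, where submultiplicativity suffices, the lower bound requires care because $\psi_{\min}$ of a product is not simply the product of $\psi_{\min}$'s in general. The cleanest route is probably to exploit the precise structure of the oblique projector and the orthonormality of $Q_k$ and $P$ so that the relevant minimal gain of $(I-\mathbb P)$ on the orthogonal complement of $\calq_k$ is exactly $\norm{(Q_k^TP)^{-1}}$; pinning down that the extremal directions align correctly is the delicate point, and I would lean on the analogous optimality argument in \cite{Sorensen} to make it rigorous.
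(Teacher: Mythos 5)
Your proposal matches the paper's proof essentially step for step: the identity $Q_k^T(I-\mathbb{P})=0$ yielding $A(I-\mathbb{P})=A(I-Q_kQ_k^T)(I-\mathbb{P})$, the projector norm identity $\norm{I-\mathbb{P}}=\norm{\mathbb{P}}=\norm{(Q_k^TP)^{-1}}$, and the computation $A(I-Q_kQ_k^T)=\wh U\,\wh\Gamma\,T_{22}^T\,\wh Q^T$ obtained from the QR factorization of $Y$. The lower bound you flag as delicate is dispatched in the paper simply by the product inequality $\norm{XY}\ge\psi_{\min}(X)\,\norm{Y}$ (the same Householder inequality already invoked for \cref{pp1}), applied with $X=A(I-Q_kQ_k^T)$ and $Y=I-\mathbb{P}$, so no argument about aligning extremal directions is needed.
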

\begin{proof} We have that $Q_k^T\mathbb{P}=Q_k^TP(Q_k^TP)^{-1}Q_k^T=Q_k^T$ implies $Q_k^T(I-\mathbb{P}) = 0$.
Therefore,
\[\norm{A-A\mathbb{P}}=\norm{A(I-\mathbb{P})} = \norm{A(I - Q_kQ_k^T)(I-\mathbb{P})}\leq \norm{A(I - Q_kQ_k^T)}~\norm{I-\mathbb{P}}\]
and also
\[\norm{A(I - Q_kQ_k^T)(I-\mathbb{P})} \ge \psi_{\min}(A(I - Q_kQ_k^T))~\norm{I-\mathbb{P}} \ .\]
Note that, since $k<n$, we know that $\mathbb{P} \ne 0$ and $\mathbb{P} \ne I$, and hence (see, e.g., \cite{Daniel})
\[\norm{I-\mathbb{P}} =\norm{\mathbb{P}} =\norm{(Q_k^TP)^{-1}}.\]
With $A = U \, \Gamma \,Y^T$, $A_k = U_k\Gamma_kY_k^T$, $Y=QT$ and $Y_k=Q_kT_k$ we have
\begin{align*}
A\ Q_k\, Q_k^T &= \big[U_k \ \ \widehat U \big] \begin{bmatrix} \Gamma_k & 0\\ 0 & \widehat \Gamma \end{bmatrix} \begin{bmatrix} T_k^T & 0 \\[0.5mm] T_{12}^T & T_{22}^T \end{bmatrix} \begin{bmatrix} I_k \\ 0 \end{bmatrix} Q_k^T \\
&= U_k\Gamma_kT_k^TQ_k^T + \widehat U \, \widehat \Gamma \, T_{12}^TQ_k^T,
\end{align*}
and hence
\begin{align*}
A\,(I-Q_kQ_k^T) & = (A-A_k)-\widehat U \, \widehat \Gamma \, T_{12}^TQ_k^T \\
& = \wh U \, \wh \Gamma \, \wh T^T Q^T - \wh U \, \wh \Gamma \, T_{12}^TQ_k^T = \wh U \, \wh \Gamma \, T_{22}^T \wh Q^T.
\end{align*}
This implies
\[\norm{A\,(I - Q_kQ_k^T)} \le \gamma_{k+1} \cdot \norm{T_{22}}\]
and
\[\norm{A\,(I - Q_kQ_k^T)} \ge \gamma_{k+1} \cdot \psi_{\min}(T_{22}). \]
\end{proof}

Let us now consider the operation on the left-hand side of $A$. Given the set of interpolation indices $\{s_1, \dots, s_k\}$ determined from $U_k$, $S = [\be_{s_1},\dots,\be_{s_k}]$  and for a nonsingular $S^TU_k$, we have the DEIM interpolatory projector $\mathbb{S}=U_k(S^TU_k)^{-1}S^T$. Since $U_k$ consists of the dominant $k$ left generalized singular vectors of $A$ and has orthonormal columns, it is not necessary to perform a QR-decomposition as we did in \cref{pp4}.

The following proposition is analogous to \cref{pp4}. The results are similar to those in \cite[p.~A1461]{Sorensen} except that here, we use the approximation error of the GSVD instead of the SVD.
\begin{proposition}
\label{pp4b} Given $U_k \in \R^{m\times k}$ with orthonormal columns  where $k<m$, let $S \in \R^{m\times k}$ be a selection matrix and  $S^TU_k$ be nonsingular. Furthermore, let $\mathbb{S} = U_k(S^TU_k)^{-1}S^T$, then, with $\widehat T$ as in \eqref{eq:T_hat},
\[\gamma_{k+1}\cdot\psi_{\min}(\widehat{T})\cdot\norm{(S^TU_k)^{-1}}\le\norm{A-\mathbb{S}A}\leq \gamma_{k+1}\cdot\norm{\widehat{T}} \cdot \norm{(S^TU_k)^{-1}}. \]
\end{proposition}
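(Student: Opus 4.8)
The plan is to follow the proof of \cref{pp4} verbatim in spirit, with two simplifications coming from the fact that $U_k$ already has orthonormal columns: the oblique projector $\mathbb{S}$ now acts on the \emph{left}, and the orthogonal projector $U_kU_k^T$ is available directly, so no QR factorization on the left-hand side is needed. The starting identity is that $\mathbb{S}=U_k(S^TU_k)^{-1}S^T$ is idempotent with range $\Range(U_k)$, whence $\mathbb{S}U_k=U_k$, so $(I-\mathbb{S})U_k=0$ and therefore $(I-\mathbb{S})U_kU_k^T=0$. This lets me insert the orthogonal residual for free,
\[
A-\mathbb{S}A=(I-\mathbb{S})A=(I-\mathbb{S})(I-U_kU_k^T)A,
\]
which is the exact left-sided analogue of the factorization $A-A\mathbb{P}=A(I-Q_kQ_k^T)(I-\mathbb{P})$ in \cref{pp4}.

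I would then isolate the two factors. For the projector, the same idempotent identity invoked in \cref{pp4} gives $\norm{I-\mathbb{S}}=\norm{\mathbb{S}}$, and since $U_k$ has orthonormal columns and $S^T$ is a row selection, $\norm{\mathbb{S}}=\norm{U_k(S^TU_k)^{-1}S^T}=\norm{(S^TU_k)^{-1}}$. For the residual I use the tail of the GSVD: from $A=U_k\Gamma_kY_k^T+\wh U\,\wh\Gamma\,\wh Y^T$ together with $U_k^TU_k=I_k$ and $U_k^T\wh U=0$ I get $(I-U_kU_k^T)A=\wh U\,\wh\Gamma\,\wh Y^T$, and substituting the QR factorization $\wh Y=Q\wh T$ from \eqref{eq:QR}--\eqref{eq:T_hat} yields $(I-U_kU_k^T)A=\wh U\,\wh\Gamma\,\wh T^TQ^T$. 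Since $\wh U$ has orthonormal columns and $Q$ is orthogonal, $\norm{(I-U_kU_k^T)A}=\norm{\wh\Gamma\,\wh T^T}$; the nonincreasing ordering of the generalized singular values gives $\norm{\wh\Gamma}=\gamma_{k+1}$, so submultiplicativity bounds this above by $\gamma_{k+1}\norm{\wh T}$, while the product inequality $\psi_1(\wh\Gamma\,\wh T^T)\ge\psi_1(\wh\Gamma)\,\psi_{\min}(\wh T^T)$ used in \cref{pp1} bounds it below by $\gamma_{k+1}\,\psi_{\min}(\wh T)$.

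The upper bound is then immediate from submultiplicativity: $\norm{A-\mathbb{S}A}\le\norm{I-\mathbb{S}}\,\norm{(I-U_kU_k^T)A}\le\gamma_{k+1}\norm{\wh T}\,\norm{(S^TU_k)^{-1}}$. The main obstacle is the lower bound, and it is genuinely more delicate than in \cref{pp4}. There the projector $(I-\mathbb{P})$ stands to the \emph{right} of the error operator, and $\Range(I-\mathbb{P})=\Range(Q_k)^\perp$ is exactly the subspace on which $A(I-Q_kQ_k^T)$ is injective; choosing the unit vector that realizes $\norm{I-\mathbb{P}}$ therefore lands in that good subspace and the norm of the projector survives into the lower bound. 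Here the projector sits on the \emph{left}, so the analogous step would pass to transposes and require the vector realizing $\norm{(S^TU_k)^{-1}}$ to lie in the range of $((I-U_kU_k^T)A)^T=Q\wh T\wh\Gamma\wh U^T$; but this range is $\Range(\wh U)\subseteq\Range(U_k)^\perp$, which does not fill $\Range(U_k)^\perp$ when $m>n$, the gap being $\Range(U)^\perp$. Consequently the clean product inequalities deliver only $\gamma_{k+1}\,\psi_{\min}(\wh T)\cdot c$, where $c$ is the smallest singular value of $I-\mathbb{S}$ restricted to $\Range(\wh U)$ rather than the full $\norm{(S^TU_k)^{-1}}$. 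I would therefore expect the crux to be replacing this restricted constant by $\norm{(S^TU_k)^{-1}}$ --- presumably by exhibiting a right singular vector of $(I-U_kU_k^T)A$ whose image aligns with the direction realizing $\norm{I-\mathbb{S}}$ --- and I anticipate that making this alignment rigorous (or else weakening the stated lower bound) is where the real work lies.
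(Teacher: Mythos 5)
Your argument follows the paper's proof step for step: the same factorization $\norm{A-\mathbb{S}A}=\norm{(I-\mathbb{S})(I-U_kU_k^T)A}$, the same identity $\norm{I-\mathbb{S}}=\norm{\mathbb{S}}=\norm{(S^T\!U_k)^{-1}}$, and the same computation $(I-U_kU_k^T)A=\wh U\,\wh\Gamma\,\wh T^TQ^T$ yielding $\gamma_{k+1}\cdot\psi_{\min}(\wh T)\le\norm{(I-U_kU_k^T)A}\le\gamma_{k+1}\cdot\norm{\wh T}$. The upper bound then follows by submultiplicativity exactly as you write it, and that half is complete and correct.

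Your reservation about the lower bound is well founded, and the paper does not resolve it either: after establishing the two ingredients above, its proof simply says ``from which the result follows,'' which silently assumes the alignment step you identify as the crux. The obstruction is real. For $v\in\Range(\wh U)$ one has, by orthogonality of $\Range(U_k)$ and $\Range(\wh U)$, $\norm{(I-\mathbb{S})v}^2=\norm{v}^2+\norm{(S^T\!U_k)^{-1}S^Tv}^2$, and since $S^Tv$ ranges only over $\Range(S^T\wh U)$, it need not reach the direction in which $(S^T\!U_k)^{-1}$ attains its norm; so the factor $\norm{(S^T\!U_k)^{-1}}$ cannot be recovered in general. A small instance shows the stated lower bound can actually fail: take $m=3$, $n=2$, $k=1$, $U_k=(1/\sqrt2,\,1/\sqrt2,\,0)^T$, $\wh U=(0,0,1)^T$, and $s_1=1$ (the DEIM choice). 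Then $S^T\wh U=0$, so $(I-\mathbb{S})\wh U=\wh U$ and $\norm{A-\mathbb{S}A}=\gamma_{2}\,\norm{\wh T}$, whereas the claimed lower bound is $\norm{(S^T\!U_k)^{-1}}\,\gamma_{2}\,\psi_{\min}(\wh T)=\sqrt2\,\gamma_{2}\,\norm{\wh T}$ (here $\wh T$ is a single column, so $\psi_{\min}(\wh T)=\norm{\wh T}$). What does hold, by the computation above, is $\sigma_{\min}\bigl((I-\mathbb{S})\wh U\bigr)\ge1$ and hence the weaker bound $\norm{A-\mathbb{S}A}\ge\gamma_{k+1}\cdot\psi_{\min}(\wh T)$; your suggestion to weaken the stated lower bound is the right resolution. (The same caveat applies to the lower bounds in \cref{pp4}, where the range of $A(I-Q_kQ_k^T)$ likewise need not contain the maximizer of $\norm{(I-\mathbb{P})\,\cdot\,}$.)
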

\begin{proof}
We have
\[\norm{A-\mathbb{S}A} =\norm{(I-\mathbb{S})A}=\norm{(I-\mathbb{S})(I-U_kU_k^T)A}. \]
Similar to before, since $k<m$, we know that $\mathbb{S} \ne 0$ and $\mathbb{S} \ne I$ hence 
\[\norm{I-\mathbb{S}} =\norm{\mathbb{S}} =\norm{(S^TU_k)^{-1}}.\] 
Since $(I-U_k U_k^T) A = A-U_k \Gamma_k Y_k^T = \wh U \, \wh \Gamma \, \wh Y^T = \wh U \, \wh \Gamma \, \wh T^T\!Q^T$ we get
\[\norm{(I-U_kU_k^T)A}=\norm{A-A_k} \leq \gamma_{k+1}\cdot\norm{\wh{T}},\]
and $\norm{(I-U_kU_k^T)A} \ge \gamma_{k+1} \cdot \psi_{\min}(\wh{T})$, from which the result follows. 
\end{proof}

We will now use \cref{pp4,pp4b}  to find a bound for the approximation error of the GCUR of $A$ relative to $B$. As in \cite{Sorensen} we first show in the following proposition that the error bounds of the interpolatory projection of $A$ onto the chosen rows and columns apply equally to the orthogonal projections of $A$ onto the same row and column spaces. 
\begin{proposition}\label{pp5}(Generalization and slight adaptation of \cite[Lemma~4.2]{Sorensen}) Given the selection matrices $P$, $S$, let $C=AP$ and $R=S^T\!A$. Suppose that $C \in \R^{m \times k}$ and $R \in \R^ {k \times n}$ are full rank matrices with $k< \min(m,n)$, and that $Q_k^TP$ and $S^TU_k$ are nonsingular. With $\widehat T$ and $T_{22}$ as in \eqref{eq:QR}--\eqref{eq:T_hat}, we have the bound for the orthogonal projections of $A$ onto the column and row spaces: 
\begin{align*}
\norm{(I-CC^+)A} & \le \gamma_{k+1} \cdot \norm{T_{22}} \cdot \norm{(Q_k^TP)^{-1}}, \\
\norm{A(I-R^+\!R)} & \le \gamma_{k+1} \cdot \norm{\wh{T}} \cdot \norm{(S^TU_k)^{-1}}.
\end{align*}
\end{proposition}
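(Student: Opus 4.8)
The plan is to deduce both inequalities from the interpolatory-projection bounds already proved in \cref{pp4,pp4b}, using only the optimality of orthogonal projection. The guiding observation is that an orthogonal projector annihilates everything in the subspace onto which it projects and has norm at most $1$; consequently the orthogonal-projection error can never exceed the error of \emph{any} oblique projection onto the same subspace. So once I exhibit the interpolatory approximations $A\mathbb{P}$ and $\mathbb{S}A$ as matrices whose column space (respectively row space) already sits inside $\text{Range}(C)$ (respectively the row space of $R$), the desired bounds follow immediately from the corresponding results for $\norm{A-A\mathbb{P}}$ and $\norm{A-\mathbb{S}A}$.

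For the column bound, I would first rewrite the oblique projection of \cref{pp4} as $A\mathbb{P} = AP(Q_k^TP)^{-1}Q_k^T = C(Q_k^TP)^{-1}Q_k^T$, which shows that every column of $A\mathbb{P}$ lies in $\text{Range}(C)$; here the assumed invertibility of $Q_k^TP$ is exactly what makes this expression meaningful. Since $CC^+$ is the orthogonal projector onto $\text{Range}(C)$, we have $(I-CC^+)C = 0$ and hence $(I-CC^+)A\mathbb{P} = 0$, together with $\norm{I-CC^+}\le 1$. Therefore
\[
\norm{(I-CC^+)A} = \norm{(I-CC^+)(A-A\mathbb{P})} \le \norm{A-A\mathbb{P}} \le \gamma_{k+1}\cdot\norm{T_{22}}\cdot\norm{(Q_k^TP)^{-1}},
\]
where the final step is precisely the upper bound of \cref{pp4}.

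The row bound is entirely analogous, now with the projector acting on the right. Writing the interpolatory projection of \cref{pp4b} as $\mathbb{S}A = U_k(S^TU_k)^{-1}S^TA = U_k(S^TU_k)^{-1}R$ shows that every row of $\mathbb{S}A$ lies in the row space of $R$, so $(\mathbb{S}A)(I-R^+R) = 0$; since $R^+R$ is the orthogonal projector onto the row space of $R$ and $\norm{I-R^+R}\le 1$, I would conclude
\[
\norm{A(I-R^+R)} = \norm{(A-\mathbb{S}A)(I-R^+R)} \le \norm{A-\mathbb{S}A} \le \gamma_{k+1}\cdot\norm{\widehat T}\cdot\norm{(S^TU_k)^{-1}},
\]
with the last inequality supplied by \cref{pp4b}. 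The only point requiring genuine care — and the step I would double-check — is the range containments $\text{Range}(A\mathbb{P})\subseteq\text{Range}(C)$ and its row-space analogue, since these are what allow the orthogonal projector to annihilate the oblique approximation; once they are in hand, the remainder reduces to the elementary facts $\norm{I-CC^+}\le 1$ and $\norm{I-R^+R}\le 1$, valid because orthogonal projectors have unit norm (or are zero).
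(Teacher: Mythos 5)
Your proof is correct, and it reaches the paper's conclusion by the same overall strategy: reduce the orthogonal-projection error to the interpolatory-projection error and then invoke \cref{pp4,pp4b}. The one place you genuinely diverge is in how the key inequality $\norm{(I-CC^+)A}\le\norm{A-A\mathbb{P}}$ is established. The paper introduces the oblique projector $\Pi_P = P(P^T\!A^T\!AP)^{-1}P^T\!A^T\!A$, verifies $CC^+\!A = A\Pi_P$ and $\Pi_P\mathbb{P}=\mathbb{P}$, and then factors $A(I-\Pi_P) = (I-CC^+)A(I-\mathbb{P})$; your argument skips this machinery entirely by observing $A\mathbb{P}=C(Q_k^TP)^{-1}Q_k^T$, so that $(I-CC^+)A\mathbb{P}=0$ and $\norm{(I-CC^+)A}=\norm{(I-CC^+)(A-A\mathbb{P})}\le\norm{A-A\mathbb{P}}$, which is shorter and makes the underlying principle transparent (an orthogonal projection onto a subspace is never worse than any other approximation from that subspace). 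The range containments you flag as the delicate step are indeed immediate from $C=AP$ and $R=S^T\!A$ together with the assumed invertibility of $Q_k^TP$ and $S^TU_k$, and your use of $\norm{I-CC^+}\le 1$ and $\norm{I-R^+R}\le 1$ is valid since $C$ and $R$ have full rank. What the paper's longer route buys is mainly continuity with the proof of \cite[Lemma~4.2]{Sorensen}, whose structure it deliberately mirrors; your version is the more economical self-contained argument.
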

\begin{proof} This proof is a minor modification of that of \cite[Lemma~4.2]{Sorensen}; we closely follow their proof technique.  With $C=AP$ of full rank, we have $C^+=(P^T\!A^T\!AP)^{-1}(AP)^T$. With this, the orthogonal projection of $A$ onto Range$(C)$ can be stated as
\[CC^{+\!}A=(AP(P^T\!A^T\!AP)^{-1}P^T\!A^T)A.\]
Let $\Pi_P =P(P^T\!A^T\!AP)^{-1}P^T\!A^T\!A$, note that $\Pi_P P = P$ since $\Pi_P$ is an oblique projector on Range$(P)$. We can rewrite $CC^+\!A$ as $CC^+\!A=A\Pi_P$.
Hence the error in the orthogonal projection of $A$  will be
$(I-CC^+)A=A(I-\Pi_P)$.
Since $\Pi_P  \mathbb{P}= \mathbb{P}$, we have
\[A(I-\Pi_P)=A(I-\Pi_P)(I-\mathbb{P})=(I-CC^+)A(I-\mathbb{P}),\]
therefore
\begin{align*}
\norm{(I-CC^+)A} &=\norm{A(I-\Pi_P)}=\norm{(I-CC^+)A(I-\mathbb{P})} \\
&\leq \norm{(I-CC^+)}~\norm{A(I-\mathbb{P})}.
\end{align*}
With $C$ being nonsquare, $\norm{I-CC^+} =1$ (see, e.g., \cite{Daniel})
and $\norm{A(I-\mathbb{P})}\leq \gamma_{k+1}\cdot \norm{T_{22}} \cdot \norm{(Q_k^TP)^{-1}}$ from \cref{pp4}, we have
\[\norm{(I-CC^+)A}\leq \gamma_{k+1}\cdot \norm{T_{22}} \cdot \norm{(Q_k^TP)^{-1}}.\]
In a similar vein, with $R=S^T\!A$ and $R^+=R^T(RR^T)^{-1}$ we have $R^+=A^TS(S^T\!AA^T\!S)^{-1}$ and the error in the orthogonal projection of $A$ is $A(I-R^+\!R) =(I-\Pi_S)A$, where $\Pi_S = AA^T\!S(S^T\!AA^T\!S)^{-1}S^T$, so that
\[(I-\Pi_S)A=(I-\mathbb{S})(I-\Pi_S)A=(I-\mathbb{S})A(I-R^+\!R)\]
and
\[
\norm{A(I-R^+\!R)}\leq \norm{(I-\mathbb{S})A}~\norm{(I-R^+\!R)} \leq \gamma_{k+1} \cdot \norm{\wh{T}} \cdot \norm{(S^TU_k)^{-1}}. 
\]
\end{proof}

This result helps to prove an error bound for the GCUR approximation error. For the following theorem, we again closely follow the approach of \cite{Sorensen} which also follows a procedure in \cite{Mahoney}.
As stated in \cref{Dfn4} the middle matrix can be computed as $M=(C^T\!C)^{-1}C^T\!A R^T(R R^T )^{-1}=C^+\!AR^+$.
\begin{theorem} \label{theorem1} (Generalization of \cite[Thm.~4.1]{Sorensen}) Given $A \in \R^{m\times n}$ and $Y_k$, $U_k$ from \eqref{eq:tgsvd}, let $P$ and $S$ be  selection matrices so that $C=AP$ and $R=S^T\!A$ are of full rank. Let $Q_k \in \R^{n\times k}$ be the $Q$-factor of $Y_k$, and $\widehat{T}$ and $T_{22}$ as in \eqref{eq:QR}--\eqref{eq:T_hat}. Assuming $Q_k^TP$ and $S^TU_k$ are nonsingular, then with the error constants
\[\eta_p := \norm{(Q_k^TP)^{-1}}, \qquad \eta_s := \norm{(S^TU_k)^{-1}},\]
we have
\begin{equation*}\label{eq:gcur_A}
\norm{A-CMR} \leq \gamma_{k+1} \cdot (\eta_p\cdot\norm{T_{22}} +\eta_s\cdot\norm{\wh{T}}) \le \gamma_{k+1} \cdot (\eta_p + \eta_s) \cdot \norm{\wh{T}}.  
\end{equation*}
\end{theorem}
\begin{proof}
By the definition of $M$, we have
\[A-CMR=A-CC^+\!AR^+\!R=(I-CC^+)A+CC^+\!A(I-R^+\!R),\]
using the triangle inequality, it follows that
\[\norm{A-CMR}=\norm{A-CC^+\!AR^+\!R}\leq \norm{(I-CC^+)A}+\norm{CC^+}~\norm{A(I-R^+\!R)}\]
and the fact that $CC^+$ is an orthogonal projection with $\norm{CC^+}=1$,
\[\norm{A-CMR} \le \gamma_{k+1} \cdot \norm{T_{22}} \cdot \norm{(Q_k^TP)^{-1}}+\norm{(S^TU_k)^{-1}}~\norm{\widehat{T}} \cdot \gamma_{k+1}.\]
\end{proof}

The last line of \cref{theorem1} can be related to the results in \cite[Thm.~4.1]{Sorensen}; both theorems have the factors $\eta_p$ and $\eta_s$. In \cite{Sorensen}, the error of the CUR approximation of $A$ is within a factor of $\eta_p + \eta_s$ of the best rank-$k$ approximation, obtained from the SVD. \Cref{theorem1} provides a bound in terms of $\gamma_{k+1}\le 1$ from the GSVD \eqref{gsvd} and the additional factors $\norm{\widehat{T}}$ and $\norm{T_{22}}$. The results presented in this section suggest that a good index selection procedure that yields small quantities $\norm{(Q_k^TP)^{-1}}$ and $\norm{(S^TU_k)^{-1}}$ is desirable. For a bound on $\norm{T_{22}}$, given $k$, Chandrasekaran and Ipsen \cite{chandrasekaran1994} have developed an efficient algorithm that computes a rank-revealing QR factorization
such that $\norm{T_{22}} \le \psi_{k+1} (Y)\sqrt{(k+1)(n-k)}$. To bound $\norm{\widehat{T}}$, we start by restating the results of \cite[Thm.~2.3]{hansen} for the GSVD of $(A,B)$. Defining
\[L := \begin{bmatrix} A \\ B \end{bmatrix}, \quad \text{it follows that} \quad \norm{X^{-1}}=\norm{L} \le \norm{A} + \norm{B}.\]
We know from \cref{eq:mgsvd} that $X^{-T}=Y$, so we can restate the above inequality as $\norm{Y}\le \norm{A} + \norm{B}$. Given the partitioning and QR factorization of $Y$ in \cref{eq:QR},
we have that
\[
\norm{\wh T}=\norm{Q\wh T}=\norm{\wh Y} \le \norm{Y} = \norm{L} \le \norm{A} + \norm{B}.
\]
In fact, we note that we can exploit the tighter bound $\|L\| \le (\|A\|^2+\|B\|^2)^{1/2}$ to improve the bound on $\wh T$ accordingly.

We note that where these results have been presented for matrix $A$ in \eqref{eq:gcur}, similar results can be obtained for $B$. The following error bound for the approximation of $B$ is analogous to \cref{eq:gcur_A}. As noted in \cref{Dfn4}, the selection matrix $P$ is similar for the GCUR decomposition of $A$ and $B$ therefore we have the quantity $\norm{(Q_k^TP)^{-1}}$ in the error bound of both factorizations:
\begin{align*}\label{eq:gcur_B}
\norm{B-C_BM_BR_B} &\leq \sigma_{k+1} \cdot ( \norm{(Q_k^TP)^{-1}}\cdot\norm{T_{22}} +\norm{(S_B^TV_k)^{-1}}\cdot\norm{\wh{T}}) \\
 &\le \sigma_{k+1} \cdot (\norm{(Q_k^TP)^{-1}} + \norm{(S_B^TV_k)^{-1}}) \cdot \norm{\wh{T}}.  
\end{align*}
It is worth nothing that these bounds hold irrespective of the approach used to select the row and column indices. Since the GCUR algorithm presented in this paper is DEIM-based, 
\cite{Sorensen} provides deterministic bounds:
\[\norm{(Q_k^TP)^{-1}}<\sqrt{\frac{nk}{3}}\,2^k, \quad \norm{(S_A^TU_k)^{-1}} <\sqrt{\frac{mk}{3}}\,2^k, \quad \text{and} \quad \norm{(S_B^TV_k)^{-1}} <\sqrt{\frac{dk}{3}}\,2^k.\]
We refer to \cite[Lemma~4.4]{Sorensen} for the constructive proofs, and will give an example with the various quantities in \cref{exp:1}.

\section{Numerical experiments}\label{sec:EXP}
We now present the results of a few numerical experiments to illustrate the performance of GCUR for low-rank matrix approximation. For the first two experiments, we consider a case where a data matrix $A$ is corrupted by a random additive noise $E$ and the covariance of this noise (the expectation of $E^T\!E$) is not a multiple of the identity matrix. We are therefore interested in a method that can take the actual noise into account. Traditionally, a pre-whitening matrix $R^{-1}$ (where $R$ is the Cholesky factor of the noise's covariance matrix) may be applied to the perturbed matrix \cite{hansen}, so that one can use SVD-based methods on the transformed matrix. With a GSVD formulation, the pre-whitening operation becomes an integral part of the algorithm \cite{hansen2007subspace}; we do not need to explicitly compute $R^{-1}$ and transform the perturbed matrix. We show in the experiments that using SVD-based methods without pre-whitening the perturbed data yields less accurate approximation results of the original matrix. 

For the last two experiments, we consider a setting with two data sets collected under different conditions, e.g., treatment and control experiment where the former has distinct variation caused by the treatment; signal-free and signal recordings with the signal-free data set containing only noise. We are interested in exploring and identifying patterns and discriminative features that are specific to one data set.  

\begin{experiment}\label{exp:1} {\rm
This experiment is an adaptation of experiments in \cite[p.~66: Sect.~3.4.4]{hansen} and \cite[Ex.~6.1]{Sorensen}; see also the motivating example in \cref{sec:intro}. We construct matrix $A$ to be of a known modest rank. We then perturb this matrix with a noise matrix $E \in \R^{m \times n}$ whose entries are correlated. Given $A_E= A+E$, we evaluate and compare the GCUR and the CUR decomposition on $A_E$  in terms of recovering the original matrix $A$. Specifically, the performance of each decomposition is assessed based on the 2-norm of the relative matrix approximation error i.e., $\norm{A-\widetilde {A} }/\norm{A}$, where $\widetilde {A}$ is the approximated low-rank matrix. 
We present the numerical results for four noise levels; thus $E=\varepsilon\, \frac{\norm{F}}{\norm{A}} F$ where $\varepsilon$ is the parameter for the noise level and $F$ is a randomly generated correlated noise. We first generate a sparse, nonnegative rank-50 matrix $A \in \R^{m \times n}$, with $m=100000$ and $n=300$, of the form 
\[A=\sum_{j=1}^{10}\frac{2}{j}\, \bx_j\ \by_j^T + \sum_{j=11}^{50}\frac{1}{j}\, \bx_j\ \by_j^T,\]
where $\bx_j \in \R^{m}$ and $\by_j \in \R^{n}$ are sparse vectors with random nonnegative entries (i.e., $\bx_j={\sf sprand}(m,1,0.025)$ and $\by_j={\sf sprand}(n,1,0.025)$, just as in \cite{Sorensen}. Unlike \cite{Sorensen} we then perturb the matrix with a correlated Gaussian noise $E$ whose entries have zero mean and a Toeplitz covariance structure (in MATLAB $\text{desired-cov}(F)={\sf toeplitz}(0.99^0$, $\dots$, $0.99^{n-1})$, $R={\sf chol}(\text{desired-cov}(F))$, and $F= {\sf randn}(m,n)\cdot R)$ and $\varepsilon \in \{0.05$, $0.1$, $0.15$, $0.2\}$. We compute the SVD of $A_E$  and the GSVD of $(A_E,R)$ to get the input matrices for the CUR and the GCUR decomposition respectively. \Cref{fig:a,fig:b,fig:c,fig:d} compare the relative errors of the proposed DEIM-GCUR (see \cref{alg:GCUR-DEIM}) and the DEIM-CUR (see \cref{alg:CUR-DEIM}) for reconstructing the low-rank matrix $A$ for different noise levels. 
We observe that for higher noise levels the GCUR technique gives a more accurate low-rank approximation of the original matrix $A$. The DEIM-GCUR scheme seems to perform distinctly well for higher noise levels and moderate values of $k$. As indicated in \cref{sec:GCUR}, the GCUR method is slightly more expensive since it requires the computation of the TGSVD instead of the TSVD. We observe that, as $k$ approaches rank$(A)$, the relative error of the TGSVD continues to decrease; this is not true for the GCUR. We may attribute this phenomenon to the fact that the relative error is saturated by the noise considering we pick actual columns and rows of the noisy data. Since $\varepsilon$ indicates the relative noise level, it is, therefore, natural that for increasing $k$, the quality of the TSVD approximation rapidly approaches $\varepsilon$. For this experiment, we assume that an estimate of the noise covariance matrix is known and therefore we have the exact Cholesky factor; we stress that this may not always be the case. 

Therefore, we now show an example where we use an inexact Cholesky factor $\wh R$. We derive $\wh R$ by multiplying all off-diagonal elements of the exact Cholesky factor $R$ by factors which are uniformly random from the interval $[0.9, 1.1]$. Here, the experiment setup is the same as described above with the difference that we compute the GSVD of $(A_E,\wh R)$ instead. In \cref{fig:2a,fig:2b}, we observe that the GCUR and the GSVD still deliver good approximation results even for an inexact Cholesky factor $\wh R$ which may imply that we do not necessarily need the exact noise covariance. 

\begin{figure}[ht!]
	\centering
	\subfloat[$\varepsilon=0.2$.]{\label{fig:a}{\includegraphics[width=0.48\textwidth]{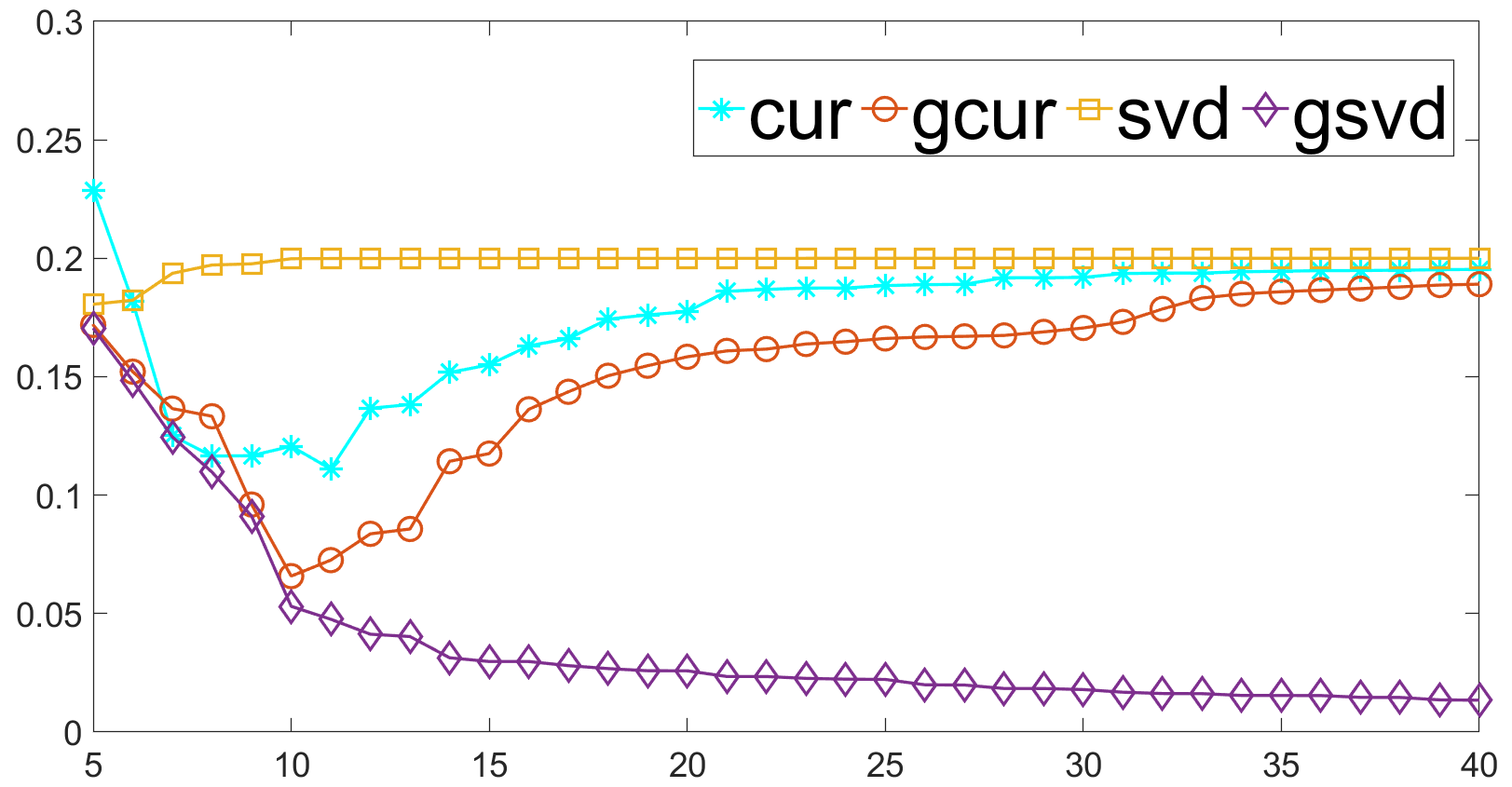}}}\hspace{0.1pt}
	\subfloat[$\varepsilon=0.15$.]{\label{fig:b}{\includegraphics[width=0.48\textwidth]{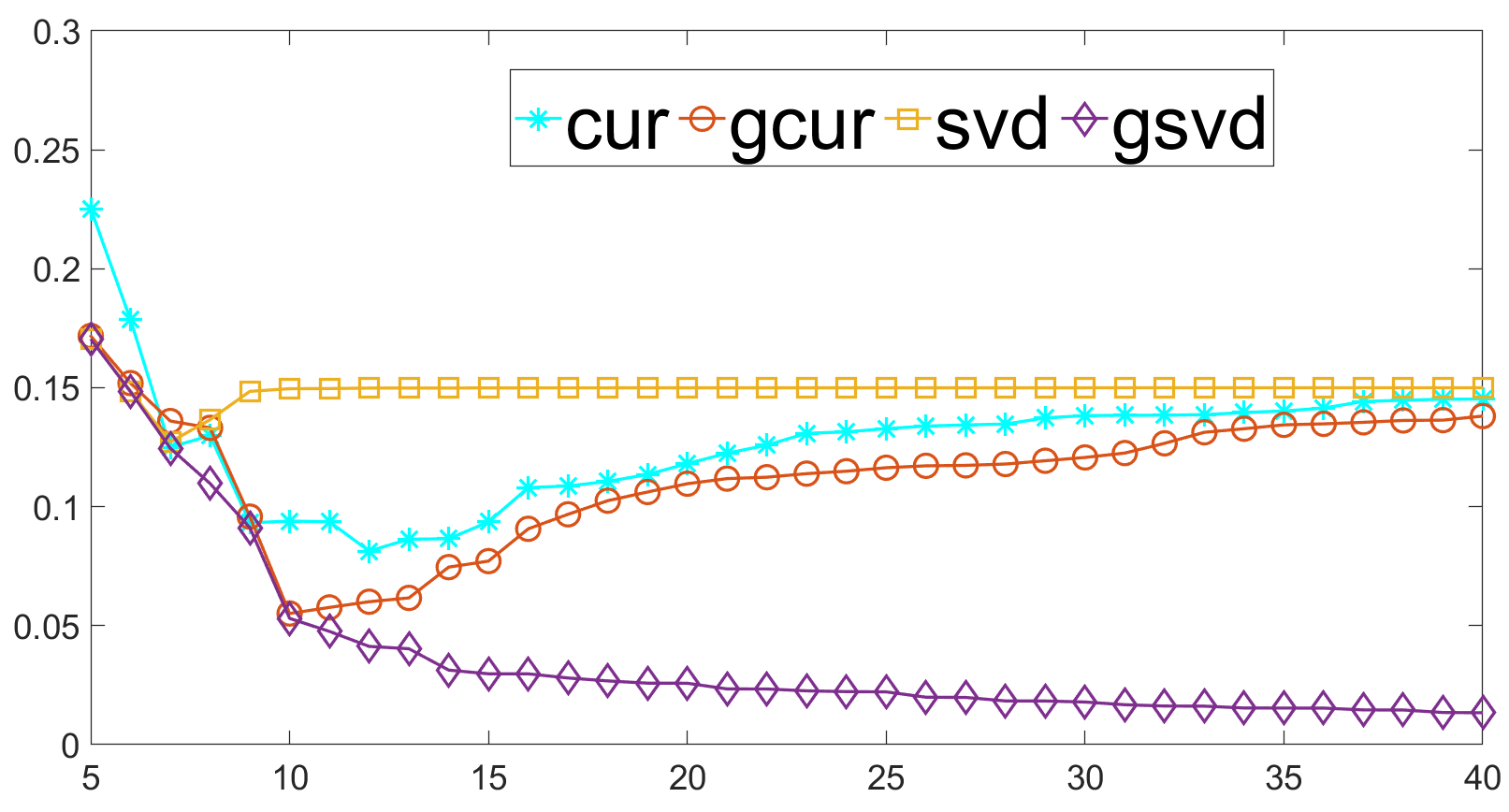} }}
	
	\subfloat[$\varepsilon=0.1$.]{\label{fig:c}{\includegraphics[width=0.48\textwidth,]{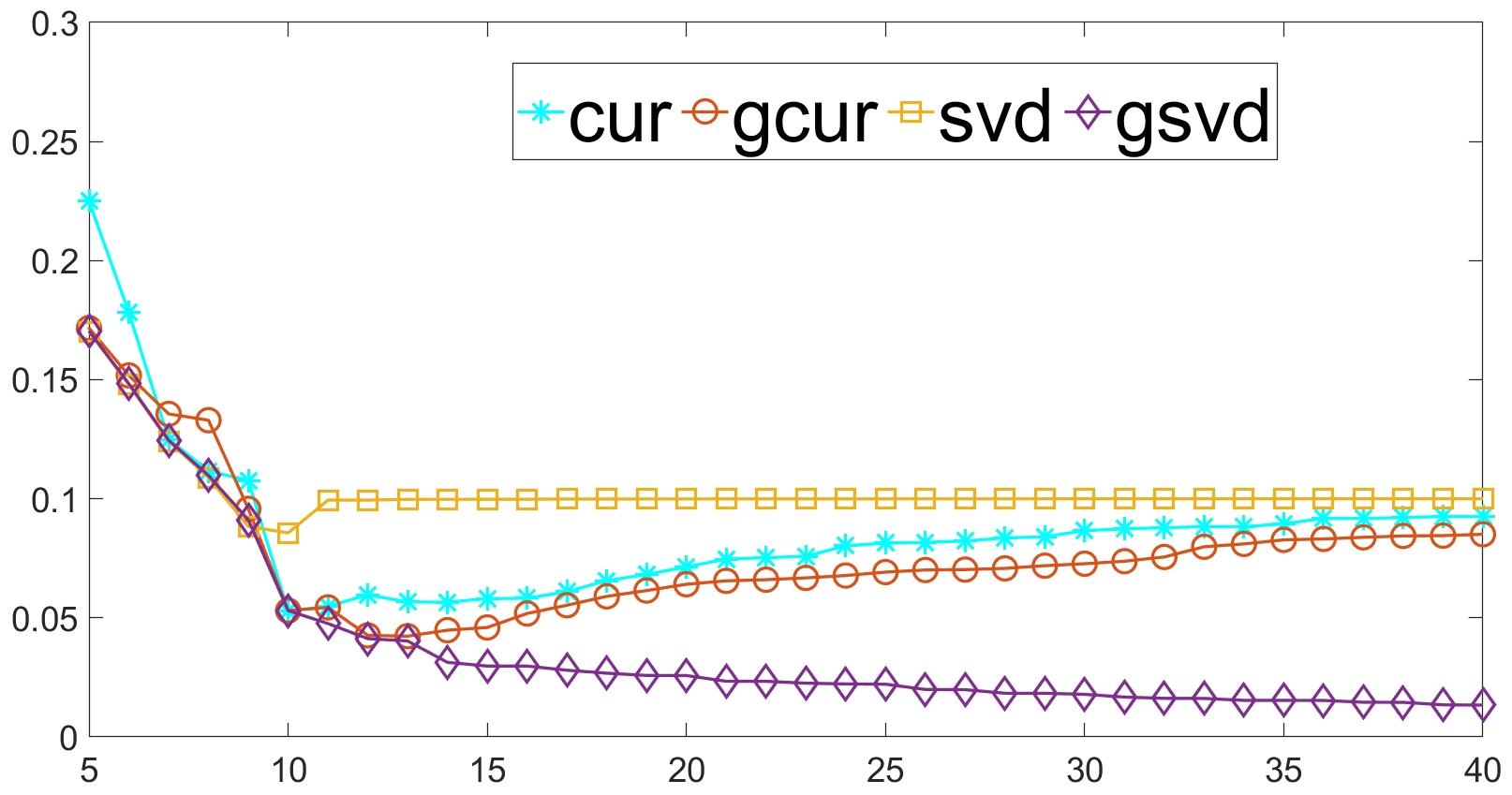} }}\hfill
	\subfloat[$\varepsilon=0.05$.]{\label{fig:d}{\includegraphics[width=0.48\textwidth]{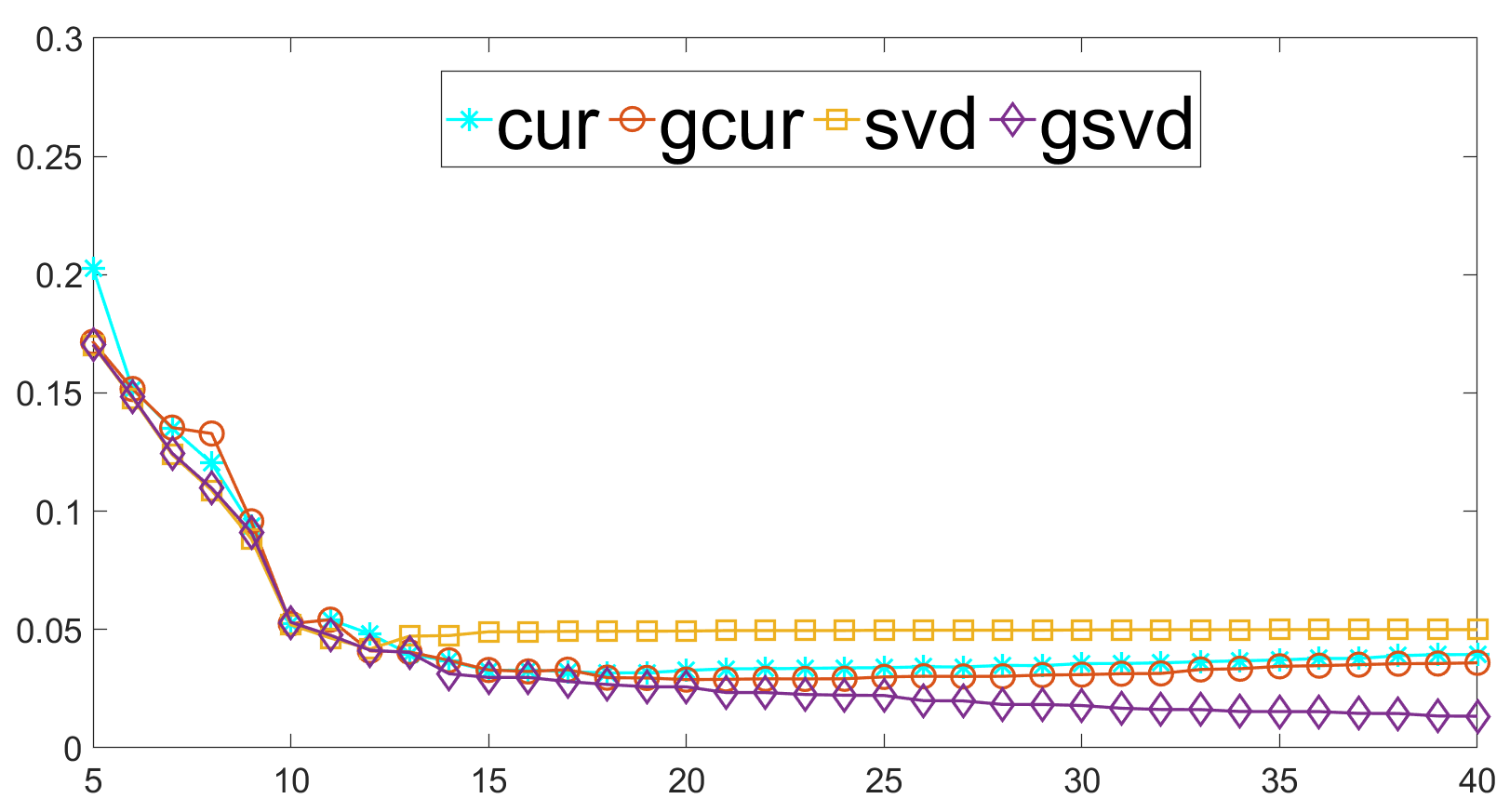} }}
	\caption{Accuracy of the DEIM-GCUR approximations compared with the standard DEIM-CUR approximations in recovering a sparse, nonnegative matrix $A$ perturbed with correlated Gaussian noise (\cref{exp:1}) using exact Cholesky factor of the noise covariance. The relative errors $\norm{A-\widetilde {A}_k }/\norm{A}$ (on the vertical axis) as a function of rank $k$ (on the horizontal axis) for $\varepsilon=0.2$, $0.15$, $0.1$, $0.05$, respectively.\label{fig:1}}
\end{figure}

\begin{figure}[ht!]
	\centering
	\subfloat[$\varepsilon=0.15$.]{\label{fig:2a}{\includegraphics[width=0.48\textwidth]{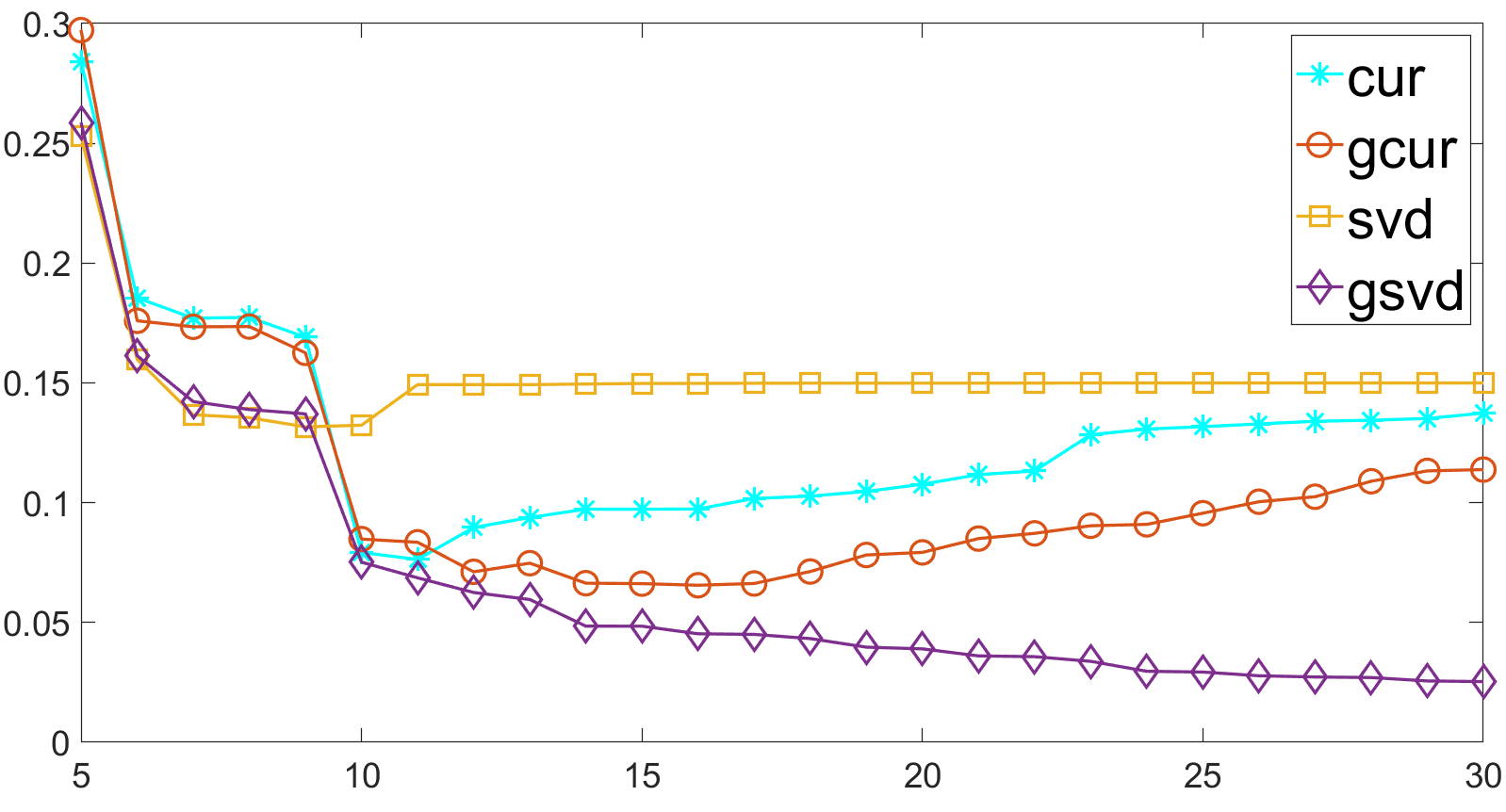}}}\hspace{0.1pt}
	\subfloat[$\varepsilon=0.1$.]{\label{fig:2b}{\includegraphics[width=0.48\textwidth]{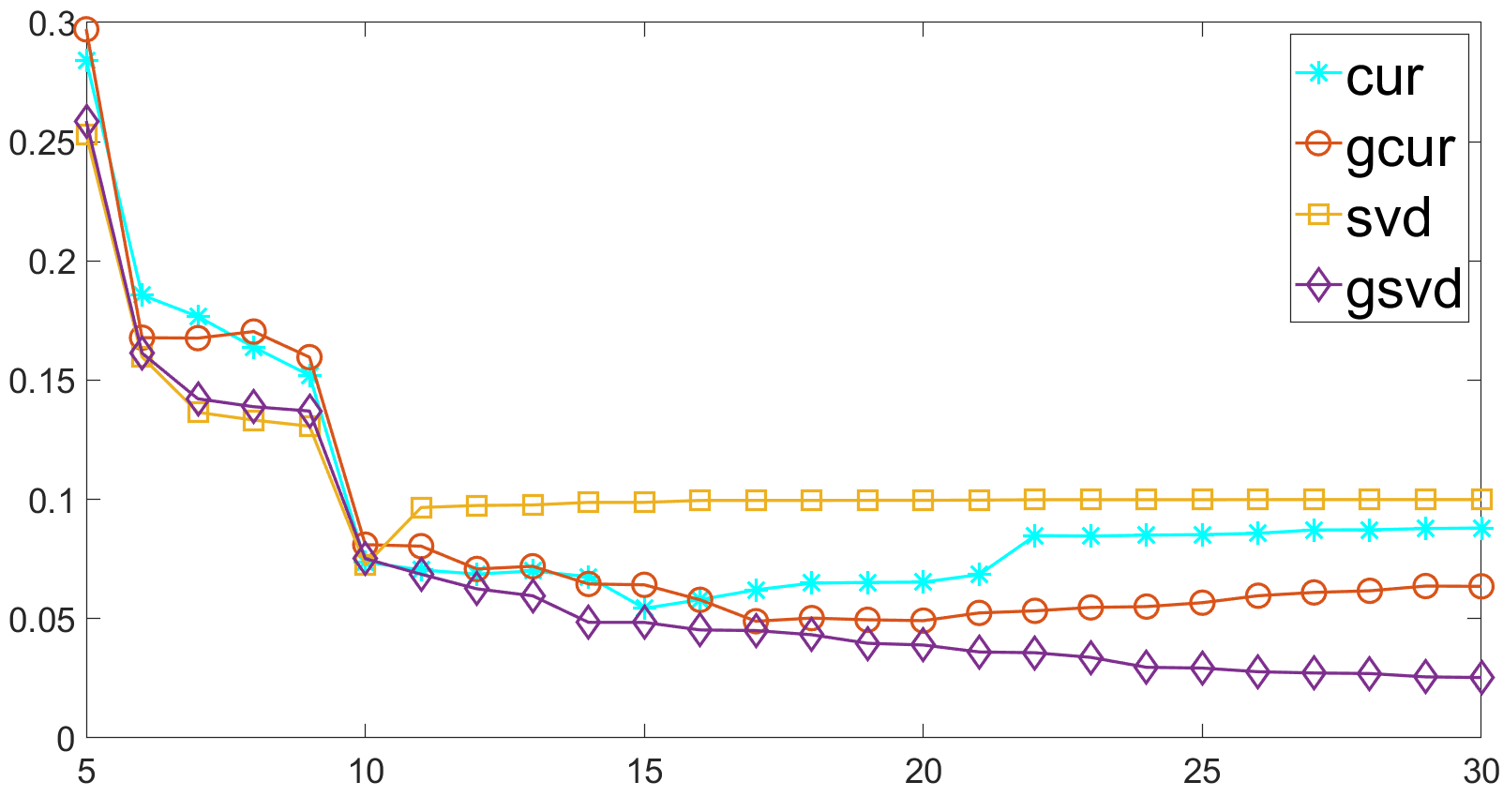} }}
	
	\caption{Accuracy of the DEIM-GCUR approximations compared with the standard DEIM-CUR approximations in recovering a sparse, nonnegative matrix $A$ perturbed with correlated Gaussian noise (\cref{exp:1}) using an inexact Cholesky factor of the noise covariance. The relative errors $\norm{A-\widetilde {A}_k }/\norm{A}$ (on the vertical axis) as a function of rank $k$ (on the horizontal axis) for $\varepsilon=0.15$, $0.1$, respectively.\label{fig:2}}
\end{figure}

We conclude this experiment by an illustration of the various quantities in \cref{theorem1}.
In \cref{fig:6}, we see that the upper bound in \cref{theorem1} may be a rather crude bound on the true GCUR error. As in \cite[Fig.~4]{Sorensen}, the quantities $\eta_S$ and $\eta_P$ may differ considerably in magnitude. While $\|T_{22}\|$ steadily decreases, $\|\wh T\|$ seems to stabilize as $k$ increases.

\begin{figure}[ht!]
	\centering
\includegraphics[width=0.5\textwidth, height=5cm]{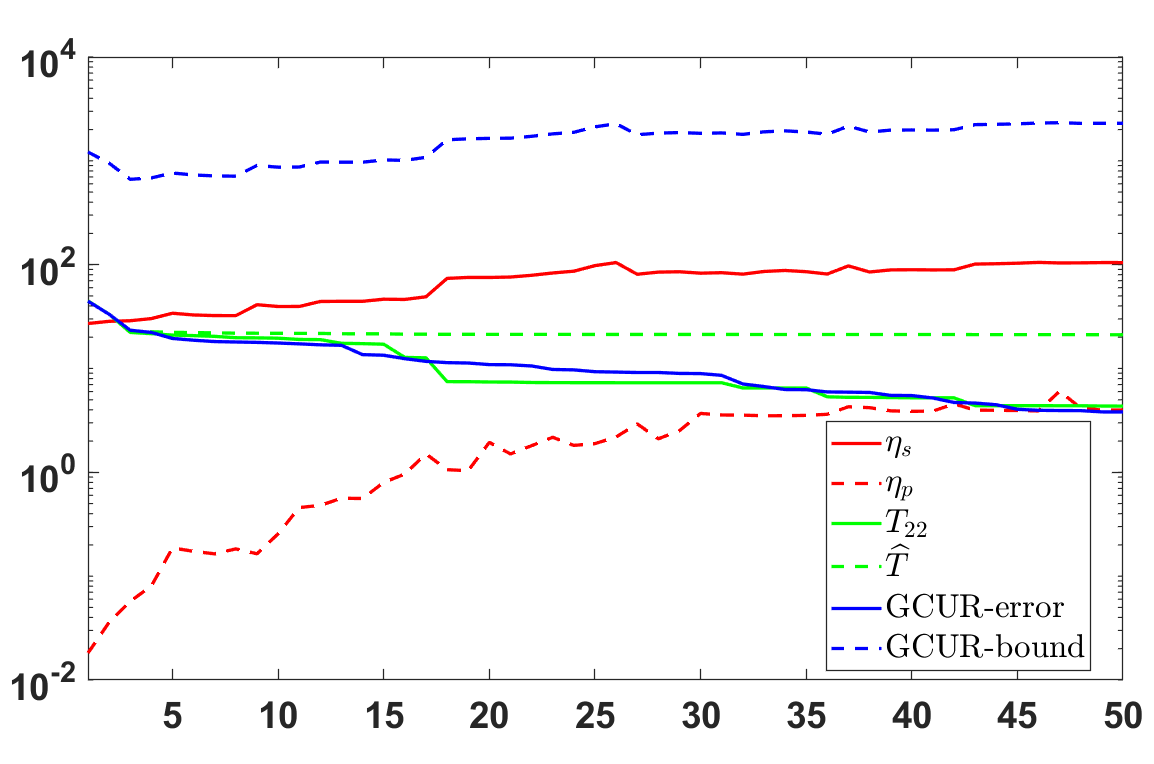}
\caption{Various quantities from \cref{theorem1}: error constants $\eta_P = \norm{(Q_k^TP)^{-1}}$ (red dashed) and $\eta_S = \norm{(S_A^TU_k)^{-1}}$ (red solid); multiplicative factors $\norm{T_{22}}$ (green solid) and $\norm{\wh T}$ (green dashed); the GCUR true error $\|A_E-(CMR)_{\sf gcur}\|$ of approximating $A_E$ in \cref{exp:1} (blue solid) and its upper bound (blue dashed).\label{fig:6}}
\end{figure}

}
\end{experiment}

\begin{experiment}\label{exp:2}{\rm For this experiment, we maintain all properties of matrix $A_E$ mentioned in the preceding experiment except for the column size that we reduce to 10000 (i.e., $A_E \in \R^{10000 \times 300}$) and instead of a sparse nonnegative matrix $A$, we generate a dense random matrix $A$. As in \cite[Ex.~6.2]{Sorensen}, we also modify $A$ so that there is a significant drop in the 10th and 11th singular values. The matrix $A$ is now of the form  
\[A=\sum_{j=1}^{10}\frac{1000}{j}\, \bx_j\ \by_j^T + \sum_{j=11}^{50}\frac{1}{j}\, \bx_j\ \by_j^T,\]
For each fixed $\varepsilon$ and $k$, we repeat the process 100 times and then compute the average relative error. The results in \cref{tab:1} show that the advantage of the GCUR over the CUR still remains even when singular values of the original matrix $A$ decrease more sharply. We observe that the difference in the relative error of the GCUR and the CUR is quite significant when the rank of the recovered matrix $\widetilde A$ is lower than that of $A$ (i.e., $k \ll 50$).

\begin{table}[ht!]

\centering
{\caption{Comparison of the qualities $\norm{A-\widetilde {A}_k }/\norm{A}$ of the TSVD, TGSVD, CUR, and GCUR approximations as a function of index $k$ and noise level $\varepsilon$ in \cref{exp:2}. The relative errors are the averages of 100 test cases.}\label{tab:1}
{\footnotesize
\begin{tabular}{clcccc} \hline \rule{0pt}{3mm}
$k$ & Method $\backslash \ \varepsilon$ & $0.05$ & $0.1$ & $0.15$ & $0.2$ \\ \hline \rule{0pt}{3.5mm}
10 & TSVD & $0.008$ &   $0.045$    & $0.150$   & $0.200$\\
& TGSVD & $0.002$ &    $0.003$ &    $0.005$ & $0.007$\\[0.5mm]
& CUR & $0.052$   & $0.118$ & $0.141$ &  $0.186$\\
& GCUR & $0.053$ &   $0.088$ &   $0.112$ &   $0.134$\\[0.5mm] \hline \rule{0pt}{3.5mm}
15 & TSVD & $0.050$ &   $0.100$    & $0.150$   & $0.200$\\
& TGSVD & $0.009$ &    $0.017$ &    $0.026$ & $0.035$\\[0.5mm]
& CUR & $0.049$   & $0.097$ & $0.146$ &  $0.196$\\
& GCUR & $0.046$ &   $0.091$ &   $0.138$ &   $0.185$\\[0.5mm] \hline \rule{0pt}{3.5mm}
20 & TSVD & $0.050$ &  $0.100$  &  $0.150$   &  $0.200$\\
& TGSVD & $0.011$  &  $0.023$  &  $0.034$  &  $0.015$\\[0.5mm]
& CUR & $0.050$  &  $0.099$  &  $0.149$  &  $0.199$\\
& GCUR & $0.049$   &  $0.097$  &  $0.146$  & $ 0.198$ \\[0.5mm] \hline
\rule{0pt}{3.5mm}
30 & TSVD & $0.050$ &   $0.100$    & $0.150$  &  $0.200$\\
& TGSVD & $0.016$ &    $0.031$ &    $0.047$ & $0.063$\\[0.5mm]
& CUR & $0.050$   & $0.100$ & $0.150$ &  $ 0.199$\\
& GCUR & $0.050$ &   $0.099$ &   $0.149$ &   $0.199$\\ \hline
\end{tabular}}}
\end{table}

The higher the noise level, the more advantageous the GCUR scheme may be over the CUR one. Especially for moderate values of $k$ such as $k=10$, the GCUR approximations are of better quality than those based on the CUR. For higher values of $k$ such as $k=30$, the approximation quality of the CUR and GCUR method become comparable since they both start to pick up the noise in the data columns. In this case, the GCUR does not improve on the CUR. Since it is a discrete method, picking indices for columns instead of generalized singular vectors, we see that the GCUR method yields worse results than the TGSVD approach.
}
\end{experiment}

\begin{experiment}\label{exp:3}{\rm Our next experiment is adapted from \cite{Abid}. We create synthetic data sets which give an intuition for settings where the GSVD and the GCUR may resolve the problem of subgroups. Consider a data set of interest (target data), $A$, containing 400 data points in a 30-dimensional feature space. This data set has four subgroups ({\color{aqua} blue}, {\color{yellow} yellow}, {\color{orange} orange}, and {\color{darkmagenta} purple}), each of 100 data points. The first 10 columns for all 400 data points are randomly sampled from a normal distribution with a mean of 0 and a variance of 100. The next 10 columns of two of the subgroups ({\color{aqua} blue} and {\color{orange} orange}) are randomly sampled from a normal distribution with a mean of 0 and a unit variance  while the other two subgroups ({\color{yellow} yellow} and {\color{darkmagenta} purple}) are randomly sampled from a normal distribution with a mean of 6 and a unit variance. The last 10 columns of subgroups {\color{aqua} blue} and {\color{yellow} yellow} are sampled from a normal distribution with a mean of 0 and a unit variance and those of {\color{darkmagenta} purple} and {\color{orange} orange} are sampled from  a normal distribution with a mean of 3 and a unit variance.

One of the goals of the SVD (or the related concept principal component analysis) in dimension reduction is to find a low-dimensional rotated approximation of a data matrix while maximizing the variances.
We are interested in reducing the dimension of $A$. If we project the data onto the two leading right singular vectors, we are unable to identify the subgroups because the variation along the first 10 columns is significantly larger than in any other direction, so some combinations of those columns are selected by the SVD. 

Suppose we have another data set $B$ (a background data set), whose first 10 columns are sampled from a normal distribution with a mean of 0 and a variance of 100, the next 10 columns are sampled from a normal distribution with a mean of 0 and a variance of 9 and the last 10 columns are sampled from a normal distribution with a mean of 0 and a unit variance. The choice of the background data set is key in this context. Generally, the background data set should have the structure we would like to suppress in the target data, which usually corresponds to the direction with high variance but not of interest for the data analysis \cite{Abid}. With the new data, one way to extract discriminative features for clustering the subgroups in $A$ is to maximize the variance of $A$ while minimizing that of $B$, which leads to a trace ratio maximization problem \cite{chen}
\[\widehat U := \argmax_{U \in \R^{n \times k}, \ U^TU=I_k}~ \text{Tr}~\big[(U^T\!B^T\!B\,U)^{-1}(U^T\!A^T\!A\,U)\big],\]
where $n=30$ and $k=5$ or $k=10$.  
By doing this, the first dimensions are less likely to be selected because they also have a high variance in data set $B$. Instead, the middle and last dimensions of $A$ are likely to be selected as they have the dimensions with the lowest variance in $B$, thereby allowing us to separate all four subgroups. The solution $\widehat U \in \R^{n \times k}$ to the above problem is given by the $k$ (right) eigenvectors of $(B^T\!B)^{-1}(A^T\!A)$ corresponding to the $k$ largest eigenvalues (cf., \cite[pp.~448--449]{fukunaga2013}); this corresponds to the (``largest'') right generalized singular vectors of $(A,B)$ (the transpose of \eqref{eqn:matrix_Y}). As seen in \cref{fig:3}, projecting $A$ onto the leading two right generalized singular vectors produces a much clearer subgroup separation (top-right figure) than projecting onto the leading two right singular vectors (top-left figure). Therefore, we can expect that a CUR decomposition based on the SVD will also perform not very well with the subgroup separation. In the bottom figures is a visualization of the data using the first two important columns selected using the DEIM-CUR (left figure) and the DEIM-GCUR (right figure). To a large extent, the GCUR is able to differentiate the subgroups while the CUR fails to do so. We investigate this further by comparing the performance of subset selection via DEIM-CUR on $A$ (\cref{alg:CUR-DEIM}) and DEIM-GCUR on $(A, B)$ (\cref{alg:GCUR-DEIM}) in identifying the subgroup or class representatives of $A$; we select a subset of the columns of $A$ (5 and 10) and compare the classification results of each method. We center the data sets by subtracting the mean of each column from all the entries in that column. Given the class labels of the subgroups, we perform a ten-fold cross-validation (i.e., split the data points into 10 groups and for each unique group take the group as test data and the rest as training \cite[p.~181]{james2013introduction}) and apply two classifiers on the reduced data set: ECOC (Error Correcting Output Coding) \cite{dietterich1994} and {\em classification tree} \cite{banfield2006} using the functions {\sf fitcecoc} and {\sf fitctree} with default parameters as implemented in MATLAB. It is evident from \cref{tab:2} that the TGSVD and the GCUR achieve the least classification error rate, e.g., for reducing the dimension from 30 to 10; 0\% and 6.3\% respectively, using the ECOC classifier and 0\% and 9.5\%  respectively, using the tree classifier. The standard DEIM-CUR method achieves the worst classification error rate.

\begin{figure}[ht!]
    \centering
    \includegraphics[width=\textwidth]{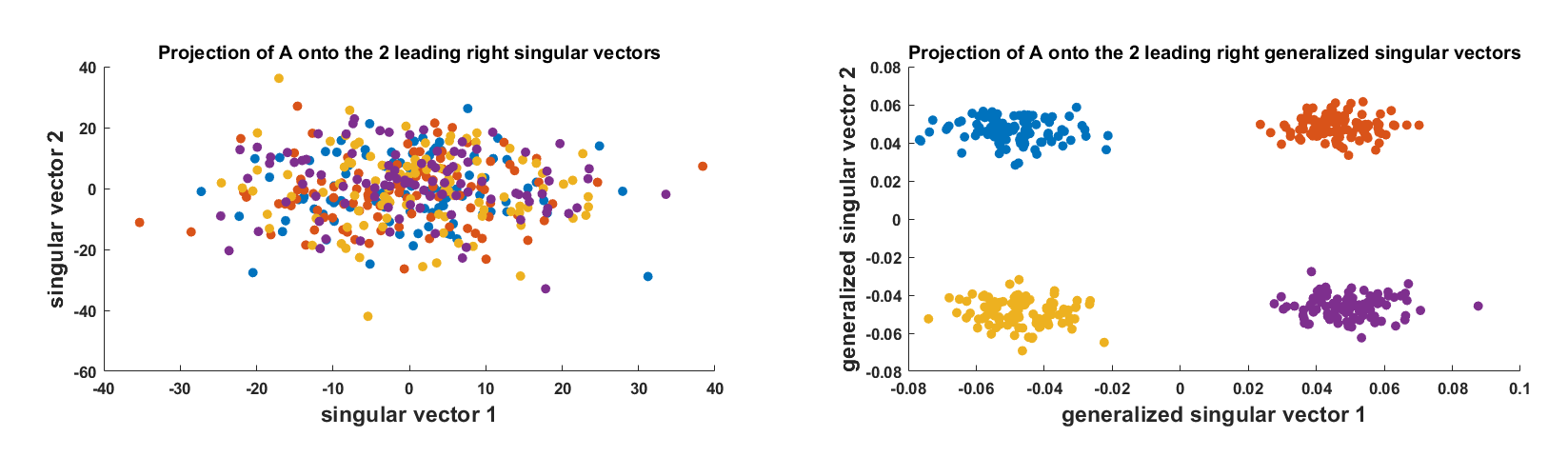}\\
    \includegraphics[width=\textwidth]{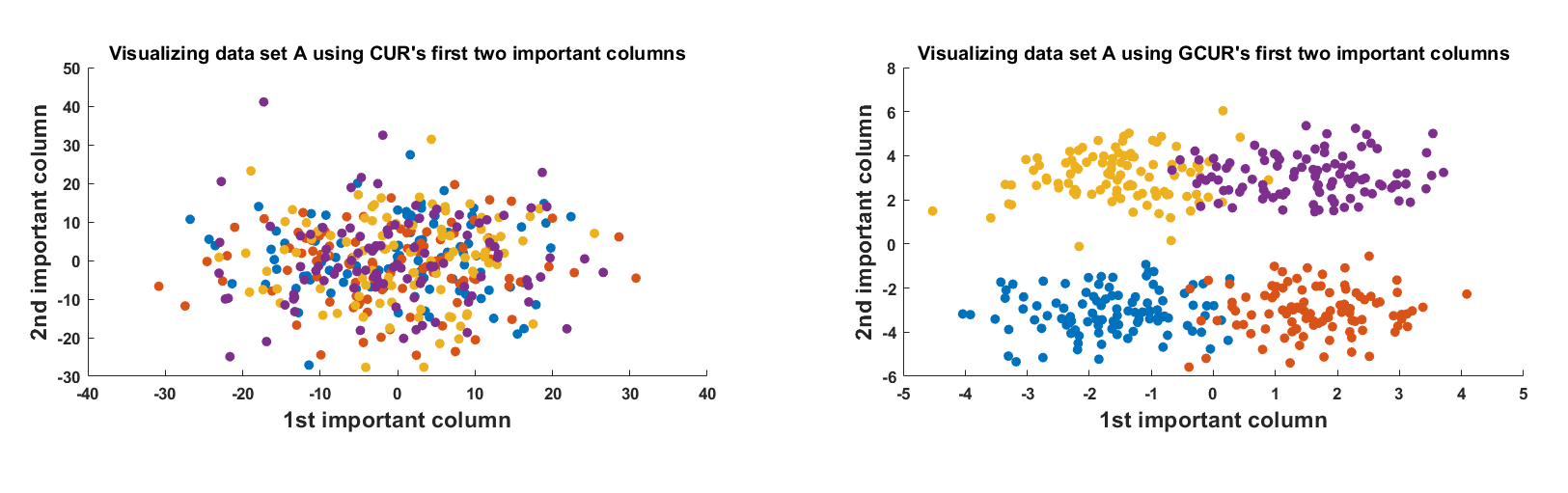}
    \caption{(Top-left) We project the synthetic data containing four subgroups onto the first two dominant right singular vectors. The lower-dimensional representation using the SVD does not effectively separate the subgroups. In the bottom-left figure, we visualize the data using the first two columns selected by DEIM-CUR. (Top-right) We illustrate the advantage of using GSVD by projecting the data onto the first two dominant right generalized singular vectors corresponding to the two largest generalized singular values. In the bottom-right figure, we visualize the data using the first two columns selected by DEIM-GCUR. The lower-dimensional representation of the data using the GSVD-based methods clearly separates the four clusters while the SVD-based methods fail to do so.}
    \label{fig:3}
\end{figure}

\begin{table}[ht!]
\centering
{\caption{$k$-Fold loss is the average classification loss overall 10-folds using SVD, GSVD, CUR, and GCUR as dimension reduction in \cref{exp:3}. The second and third columns give information on the number of columns selected from the data set using the CUR and GCUR plus the number of singular and generalized singular vectors considered for the ECOC classifier. Likewise, the fifth and sixth columns for the tree classifier.}\label{tab:2}
{\footnotesize
\begin{tabular}{lcclcc} \hline \rule{0pt}{3mm}%
Method & \multicolumn{2}{c}{$k$-Fold Loss} & Method & \multicolumn{2}{c}{$k$-Fold Loss}\\
 & 5 & 10 & & 5 &10\\ \hline \rule{0pt}{3.5mm}%
TSVD+ECOC &$0.638$ &$0.490$ & TSVD+Tree  & $0.693$ & $0.555$  \\
TGSVD+ECOC & $0\phantom{.000}$& $0\phantom{.000}$ & TGSVD+Tree  & $0\phantom{.000}$ & $0\phantom{.000}$ \\[0.5mm]
CUR+ECOC & $0.793$ &$0.485$  & CUR+Tree  & $0.793$  & $0.540$  \\
 GCUR+ECOC & $0.055$&$0.063$  & GCUR+Tree & $0.075$  &$0.095$   \\[0.5mm] \hline
\end{tabular}}}

\end{table}
}
\end{experiment}
\begin{experiment}\label{exp:4}{\rm We will now investigate the performance of the GCUR compared to the CUR on a higher-dimensional public data sets. The data sets consists of single-cell RNA expression levels of bone marrow mononuclear cells (BMMCs) from an acute myeloid leukemia (AML) patient and two healthy individuals. We have data on the BMMCs before stem-cell transplant and the BMMCs after stem-cell transplant. We preprocess the data sets as described by the authors in \cite{boileau2020}\footnote{\url{https://github.com/PhilBoileau/EHDBDscPCA/blob/master/analyses/}} keeping the 1000 most variable genes measured across all 16856 cells (patient-035: 4501 cells and two healthy individuals; one of 1985 cells and the other of 2472 cells). The data from the two healthy patients are combined to create a background data matrix of dimension $4457 \times 1000$ and we use patient-035 data set as the target data matrix of dimension $4501 \times 1000$ . Both data matrices are sparse: the  patient-035 data matrix has 1,628,174 nonzeros; i.e., about 36\% of all entries are nonzero and the background data matrix has 1,496,229 nonzeros; i.e., about 34\% of all entries are nonzero. We are interested in exploring the differences in the AML patient's BMMC cells  pre- and post-transplant.
We perform SVD, GSVD, CUR and GCUR on the target data (AML patient-035) to see if we can capture the biologically meaningful information relating to the treatment status. For the GSVD and the GCUR procedure the background data is taken into account. As evident in \cref{fig:4}, the GSVD and the GCUR produce almost linearly separable clusters which corresponds to pre- and post-treatment cells. These methods evidently capture the biologically meaningful information relating to the treatment and are more effective at separating the pre- and post-transplant cell samples compared to the other two. For the SVD and the CUR, we observe that both cell types follow a similar distribution in the space spanned by the first three dominant right singular vectors and the first three important gene columns, respectively. Both methods fail to separate the pre- and post-transplant cells.  
\begin{figure}[ht!]
    \centering
    \includegraphics[width=\textwidth]{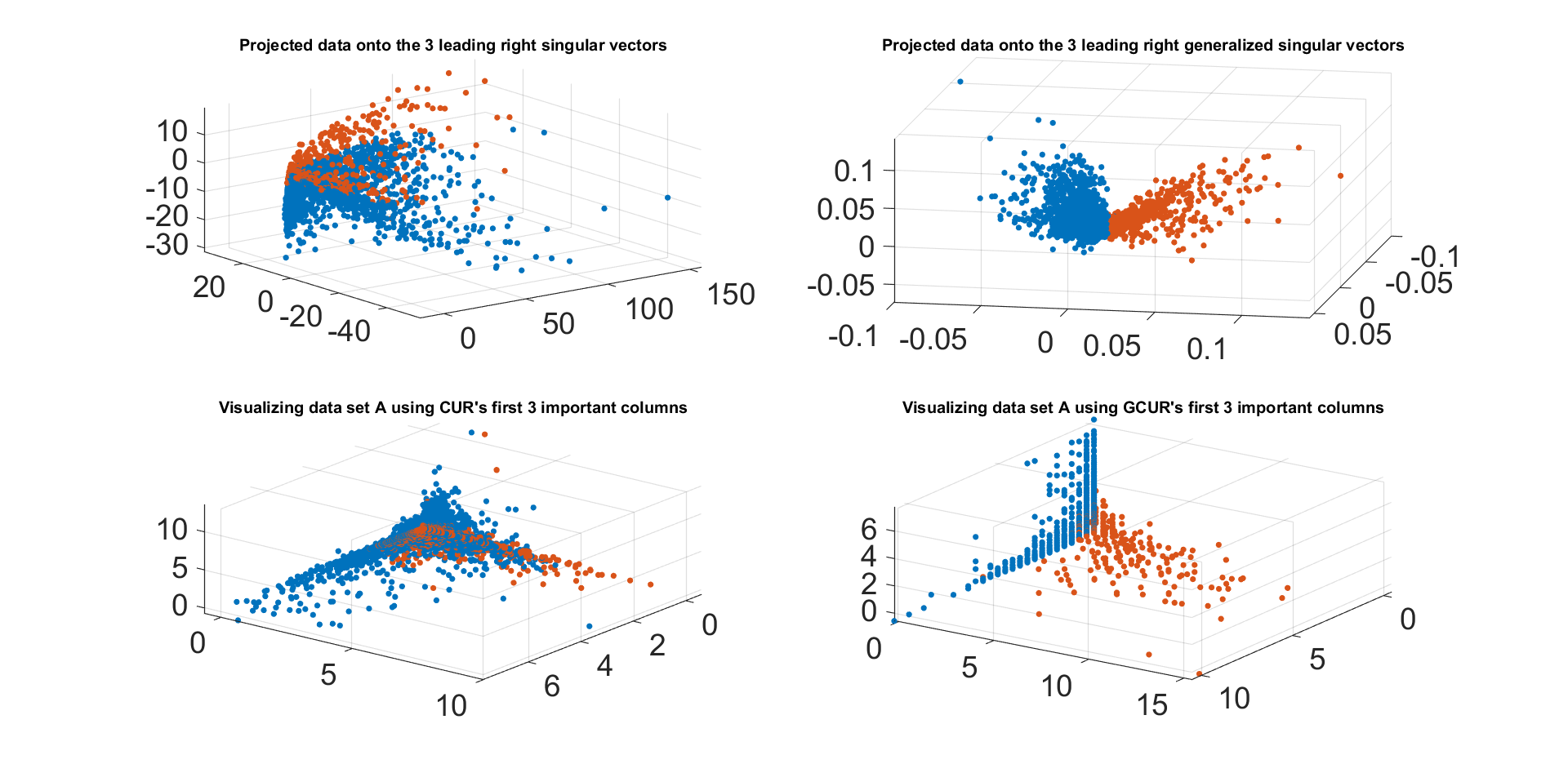}
    \caption{Acute myeloid leukemia patient-035 scRNA-seq data.(Top-left) A 3-D projection of the patient's BMMCs on the first three dominant right singular vectors. In the bottom-left figure, we visualize the data using the first three genes selected by DEIM-CUR. The lower-dimensional representation using the SVD-based methods does not effectively give a discernible cluster of the pre- and post-transplant cells. (Top-right) We illustrate the advantage of using GSVD by projecting the patient's BMMCs onto the first three dominant right generalized singular vectors corresponding to the three largest generalized singular values. In the bottom-right figure, we visualize the data using the first three genes selected by DEIM-GCUR. The lower-dimensional representation using the GSVD-based methods produce a linearly separable clusters.}
    \label{fig:4}
\end{figure}
}
\end{experiment}
\section{Conclusions}\label{sec:Con} In this paper we propose a new method, the DEIM-induced GCUR (generalized CUR) factorization with pseudocode in \cref{alg:GCUR-DEIM}. It is an extension of the DEIM-CUR decomposition for matrix pairs.
Just as the CUR decomposition has an interpolative decomposition (see, e.g., \cite{Voronin}) associated with it, there is a {\em generalized interpolative decomposition} (see \eqref{eq:ID})
associated with the GCUR decomposition.

When $B$ is square and nonsingular, there are close connections between the GCUR of $(A,B)$ and the DEIM-induced CUR of $AB^{-1}$. When $B$ is the identity, the GCUR decomposition of $A$ coincides with the DEIM-induced CUR decomposition of $A$. There exist a similar connection between the CUR of $AB^+$ and the GCUR of $(A, B)$ for a nonsquare but full-rank matrix $B$. 

While a CUR decomposition acts on one data set, a GCUR factorization decomposes two data sets together. An implication of this is that we can use it in selecting discriminative features of one data set relative to another. For subgroup discovery and subset selection in a classification problem where two data sets are available, the new method can perform better than the standard DEIM-CUR decomposition as shown in the numerical experiments. The GCUR algorithm can also be useful in applications where a data matrix suffers from non-white (colored) noise. The GCUR algorithm can provide more accurate approximation results compared to the DEIM-CUR algorithm when recovering an original matrix with low rank from data with colored noise. For the recovery of data perturbed with colored noise, we need the Cholesky factor of an estimate of the noise covariance. However, as shown in the experiments, even for an inexact Cholesky factor the GCUR may still give good approximation results. We note that, while the GSVD always provides a more accurate result than the SVD regardless of the noise level, the GCUR decomposition is particularly attractive for higher noise levels and moderate values of the rank of the recovered matrix compared to the CUR factorization. In other situations, both methods may provide comparable results. In addition, the GCUR decomposition is a discrete method, so choosing indices for columns and rows instead of the generalized singular vectors leads to worse results than the GSVD approach. 

Although we used the generalized singular vectors here, in principle one could use other vectors, e.g., an approximation to the generalized singular vectors. In our experiments, we choose the same number of columns and rows for the approximation of the original matrix. However, we do not need to choose the same number of columns and rows. We have extended the existing theory concerning the DEIM-CUR approximation error to this DEIM generalized CUR factorization; we derived the bounds of a rank-$k$ GCUR approximation of $A$ in terms of a rank-$k$ GSVD approximation of $A$.

Instead of the DEIM procedure for the index selection from the GSVD, it might be possible to use alternative index selection strategies. For a CUR decomposition, one alternative approach to DEIM is to perform a QR factorization with column pivoting \cite{Drmac} on the transpose of the matrices from the truncated SVD. In \cite{Goreinov2010,goreinov1997}, the authors propose a CUR factorization where $C$ and $R$ are selected by searching for submatrices of maximal volume in the singular vector matrices.
Another popular approach is selection via leverage score sampling \cite{Drineas,Mahoney}.
It may be interesting to extend these strategies to the context of the GCUR.


Computationally, the DEIM-GCUR algorithm requires the input of the GSVD, which is of the same complexity but more expensive than the SVD required for DEIM-CUR. For the case where we are only interested in approximating the matrix $A$ from the pair $(A, B)$, we can omit some of the lines in \cref{alg:GCUR-DEIM}; thus saving computational cost. 
In the case that the matrices $A$ and $B$ are so large, a full GSVD may not be affordable; in this case, we can consider iterative methods (see, e.g., \cite{Zwaan, hochstenbach, Zha}).

While in this work we used the GCUR method in applications such as extracting information from one data set relative to another, we expect that its promise may be more general.

\medskip\noindent
{\bf Acknowledgment}: the authors thank the referees for their very positive and helpful expert suggestions, which significantly improved the paper.

\bibliographystyle{siam}
\bibliography{references/gcur}

\end{document}